\documentclass[11pt]{amsart}
\usepackage{amssymb,amsmath,amsthm,amsfonts,mathrsfs}
\usepackage[hmargin=3cm,vmargin=3.5cm]{geometry}
\usepackage[dvipsnames,table,xcdraw]{xcolor}
\usepackage[colorlinks = true,
            linkcolor = Fuchsia,
            urlcolor  = ForestGreen,
            citecolor = WildStrawberry,
            anchorcolor = blue]{hyperref}

\usepackage{stackengine}

\usepackage[all,2cell]{xy} \UseAllTwocells 
\usepackage{tikz-cd}

\usepackage{color}
\usepackage{graphicx,psfrag}
\usepackage{amscd}  
\usepackage{stmaryrd}  
\usepackage[all,2cell]{xy} \UseAllTwocells \SilentMatrices
\usepackage{tikz}
\usepackage[dvips]{epsfig}
\usepackage{comment}
\usepackage{kbordermatrix}
\usepackage{cancel,scalefnt}

\usepackage{array}
\newcolumntype{L}{>{$}l<{$}} 

\theoremstyle{definition}
\newtheorem{theorem}{Theorem}[section]
\newtheorem{lemma}[theorem]{Lemma}
\newtheorem{remark}[theorem]{Remark}

\newtheorem{proposition}[theorem]{Proposition}

\newtheorem{corollary}[theorem]{Corollary}

\newtheorem{example}[theorem]{Example}

\catcode`~=11 
\newcommand{\urltilde}{\kern -.15em\lower .7ex\hbox{~}\kern .04em}  
\catcode`~=13 

\usetikzlibrary{arrows,automata}
\usetikzlibrary{decorations.markings}
\usetikzlibrary{decorations.pathmorphing,shapes,decorations.text,shapes.geometric}
\usepackage{multirow}

\newcounter{sarrow}

\usepackage[colorinlistoftodos]{todonotes}

\definecolor{darkred}{rgb}{0.7,0,0} 
\newcommand{\defn}[1]{{\color{darkred}\emph{#1}}} 

\title[Nilpotent representations over equioriented cyclic quivers]{Nilpotent representations over equioriented cyclic quivers}

\author[Cornell Holmes]{Cornell Holmes}
\address{Department of Mathematics, Johns Hopkins University, Baltimore, MD 21218, USA}
\email{\href{mailto:cholme21@jhu.edu}{cholme21@jhu.edu}}

\author[Mee Seong Im]{Mee Seong Im}
\address{Department of Mathematics, Johns Hopkins University, Baltimore, MD 21218, USA}
\email{\href{mailto:meeseong@jhu.edu}{meeseong@jhu.edu}}

\makeatletter 
\@namedef{subjclassname@2020}{%
  \textup{2020} Mathematics Subject Classification}

\subjclass[2020]{Primary: 17B08, 20F18, 20F19, 20D15;
Secondary: 05C10, 05C20, 05C85.}
\date{August 17, 2026}

\providecommand{\keywords}[1]{\textbf{\textit{Key words and phrases.}} #1}

\keywords{Nilpotent cone, nilpotent endomorphisms, finite field, cyclic quiver, nilpotent representations, Boolean semiring, q-binomial coefficients.}

\begin{document}

\def\mfb{\mathfrak{b}}

\def\c{\mathsf{c}}
\def\B{\mathbb{B}}
\def\E{\mathsf E}
\def\F{\mathbb{F}}
\def\I{\mathsf I}
\def\R{\mathbb R}
\def\Q{\mathbb Q}
\def\Z{\mathbb Z}
\def\N{\mathbb N}
\def\C{\mathbb C}
\def\S{\mathbb S}
\def\V{\mathbf{V}}
\def\Set{\mathsf{Set}}
\def\FinSet{\mathsf{FinSet}}
\def\Prob{\mathsf{Prob}}
\def\Image{\mathsf{Im}}
\def\Lin{\mathsf{Lin}}
\def\Nil{\mathsf{Nil}}
\def\SS{\mathbb S}
\def\GL{\mathsf{GL}}
\def\Graph{\mathsf{Graph}}

\def\for{\mathsf{for}}
\def\Hom{\mathsf{Hom}}
\def\End{\mathsf{End}}
\def\rank{\mathrm{rank}}
\def\length{\mathsf{length}}

\def\mcN{\mathcal{N}}

\def\Der{\mathsf{Der}}
\def\Pol{\mathsf{Pol}}

\newcommand{\dmod}{\mathsf{-mod}}
\newcommand{\comp}{\mathrm{comp}} 
\newcommand{\col}{\mathrm{col}}
\newcommand{\adm}{\mathrm{adm}}  
\newcommand{\Ob}{\mathrm{Ob}}
\newcommand{\Cob}{\mathsf{Cob}}
\newcommand{\UCob}{\mathsf{UCob}}
\newcommand{\COB}{\mathcal{COB}}
\newcommand{\ECob}{\mathsf{ECob}}
\newcommand{\id}{\mathsf{id}}
\newcommand{\undM}{\underline{M}}
\newcommand{\im}{\mathsf{im}\:}
\newcommand{\coker}{\mathsf{coker}}
\newcommand{\Aut}{\mathsf{Aut}}
\newcommand{\tripod}{\mathsf{Td}}
\newcommand{\BBC}{\mathbb{B}(\mathcal{C})}
\newcommand{\Pmod}{\mathrm{pmod}}
\newcommand{\gammaoneR}{\gamma_{1,R}}  
\newcommand{\gammaoneRbar}{\overline{\gamma}_{1,R}} 
\newcommand{\gammaoneRprime}
{\gamma'_{1,R}}
\newcommand{\gammaoneRbarprime}
{\overline{\gamma}'_{1,R}}
\newcommand{\qbinom}[3]{\genfrac{[}{]}{0pt}{}{#1}{#2}_{#3}}

\def\l{\lbrace}
\def\r{\rbrace}
\def\o{\otimes}
\def\lra{\longrightarrow}
\def\ed{\mathsf{ed}}
\def\Ext{\mathsf{Ext}}
\def\ker{\mathsf{ker}\:}
\def\Rep{\mathsf{Rep}}
\def\Vect{\mathsf{Vect}}
\def\Free{\mathsf{Free}}
\def\mf{\mathfrak}
\def\mcC{\mathcal{C}}
\def\mcS{\mathcal{S}}  
\def\mcQC{\mathcal{QC}}
\def\mcA{\mathcal{A}}
\def\mcF{\mathcal{F}}
\def\mcE{\mathcal{E}}
\def\Fr{\mathsf{Fr}}  
\def\ev{\mathsf{ev}}  
\def\DAG{\mathsf{DAG}}
\def\Src{\mathsf{Src}}
\def\Hgt{\mathsf{Hgt}}

\def\bbn{\mathbb{B}^n}
\def\ovb{\overline{b}}
\def\tr{{\sf tr}} 
\def\det{{\sf det }} 
\def\one{\mathbf{1}}   
\def\kk{\mathbf{k}}  
\def\gdim{\mathsf{gdim}}  
\def\rk{\mathsf{rk}}
\def\Lie{\mathsf{Lie}}
\def\IET{\mathsf{IET}}
\def\SAF{\mathsf{SAF}}

\newcommand{\indexw}{\R_{>0}} 

\newcommand{\brak}[1]{\ensuremath{\left\langle #1\right\rangle}}
\newcommand{\oplusop}[1]{{\mathop{\oplus}\limits_{#1}}}
\newcommand{\addfigure}{\vspace{0.1in} \begin{center} {\color{red} ADD FIGURE} \end{center} \vspace{0.1in} }
\newcommand{\add}[1]{\vspace{0.1in} \begin{center} {\color{red} ADD FIGURE #1} \end{center} \vspace{0.1in} }
\newcommand{\vspin}{\vspace{0.1in} }

\newcommand\circled[1]{\tikz[baseline=(char.base)]{\node[shape=circle,draw,inner sep=1pt] (char) {${#1}$};}} 

\let\oldemptyset\emptyset
\let\emptyset\varnothing

\let\oldtocsection=\tocsection
\let\oldtocsubsection=\tocsubsection
\renewcommand{\tocsection}[2]{\hspace{0em}\oldtocsection{#1}{#2}}
\renewcommand{\tocsubsection}[2]{\hspace{1em}\oldtocsubsection{#1}{#2}}

\renewcommand{\kbldelim}{(}
\renewcommand{\kbrdelim}{)}

\def\MK#1{{\color{red}[MK: #1]}}
\def\bfred#1{{\color{red}#1}}

\def\MSI#1{{\color{purple}[MSI: #1]}}
\def\bfred#1{{\color{purple}#1}}

\def\CL#1{{\color{magenta}[CL: #1]}}
\def\bfred#1{{\color{magenta}#1}}


\begin{abstract}
We give a geometric description of nilpotent representations of equioriented cyclic quivers over a field via an explicit bijection. Over a finite field, this yields formulas for the number of nilpotent representations and for the probability that a representation is nilpotent. We also give a refinement by rank. Over the Boolean semiring, we identify nilpotent semirepresentations with directed acyclic graphs, derive a recursive formula for their number, and determine the asymptotic decay rate of the probability of nilpotence.
\end{abstract}

\maketitle
\tableofcontents

%
%

\section{Introduction}
\label{sec_intro}

The nilpotent cone is predominant in representation theory, algebraic geometry, and mathematical physics. It is the set of all elements that are nilpotent in the adjoint representation. For a reductive algebraic group $G$ with Lie algebra $\mathfrak{g} = \Lie(G)$ over an algebraically closed field, it is the zero fiber of the adjoint map $\mathfrak{g}\twoheadrightarrow \mathfrak{g}/\!\!/G\simeq \mathfrak{t}/W$, where $\mathfrak{t}$ is the Lie algebra of the maximal torus $T\subseteq G$ and $W =N_G(T)/T$ is the Weyl group, with $N_G(T)$ being the normalizer of $T$ in $G$. The nilpotent cone plays a fundamental and central role in the Springer and Grothendieck--Springer resolutions~\cite{Spr76,CG97,Lus84,Spa82,MR3836769,Im_Scrimshaw_parabolic}, the geometry of singularities and Slodowy slices~\cite{GG02}, representations of quivers and loop Grassmannians~\cite{BB20,MV22}, and Higgs bundles as the zero fiber of the Hitchin map~\cite{Hit87,Hit87_stable_bundles,Lau88}, to name a few.

Let $V$ be an $n$-dimensional vector space over a field $\kk$, and let
$\mathcal N(V)$ denote the set of nilpotent endomorphisms of $V$. When $\kk=\mathbb F_q$, a finite field, Fine and Herstein proved in \cite[Theorem 1]{FH58} that $|\mathcal N(V)|=q^{n(n-1)}=|V|^{n-1}$. Equivalently, the probability that an endomorphism is nilpotent is $q^{-n}$.
Over an arbitrary field $\kk$, Leinster generalized this result by
constructing a bijection 
\begin{equation}\label{intro_Leinster_bijection}
    \mathcal{N}(V)\times V\cong\End_{\kk}(V)
\end{equation}
in \cite[Theorem 5]{Lei21}.

In \cite{CIKLR25, CILR25}, the authors extended
Leinster's bijection to pairs of linear maps. Let $V$ and $W$ be
finite-dimensional vector spaces over $\kk$, and consider the pair $(f,g)\in\Hom_\kk(V,W)\times\Hom_\kk(W,V)$ of maps. A pair $(f,g)$ is \defn{nilpotent} if the composition $g\circ f$ is nilpotent as an endomorphism of $V$. Write $\mathcal N(V,W)$ for the set of nilpotent pairs and $V\cup_0W$ for the union of $V$ and $W$ along their zero vector.
The authors gave a bijection
\begin{equation}\label{eqn_intro_pairs_bijection}
    \mathcal N(V,W)\times V\times W\cong\Hom_\kk(V,W)\times\Hom_\kk(W,V)\times(V\cup_0W).
\end{equation}
When $\kk=\mathbb F_q$, $\dim V = m$, and $\dim W = n$, it follows that the probability that a pair $(f,g)$ is nilpotent is $q^{-m}+q^{-n}-q^{-m-n}$.

Fix an integer $k\geq 2$ and let $Q$ be the cyclic quiver
\[
1 \to 2 \to \ldots \to k-1 \to k \to 1.
\]
In this paper, we extend~\eqref{eqn_intro_pairs_bijection} to linear representations of $Q$ over $\kk$.
Fix a dimension vector $\beta=(\beta_1,\ldots,\beta_k)\in\mathbb Z^k_{\geq 0}$ and vector spaces $V_1,\ldots,V_k$ over $\kk$ with $\dim V_i=\beta_i$ for each $1\leq i\leq k$. Throughout, subscripts are understood modulo $k$. The \defn{representation space} of a quiver $Q$ with dimension vector $\beta$ is $\Rep(Q,\beta)=\prod_{i=1}^k\Hom_\kk(V_i,V_{i+1})$.
A representation $(f_1,\ldots,f_k)\in\Rep(Q,\beta)$ is the cyclic analogue of a single endomorphism $g:V_1\to V_1$. Instead of iterated powers $g^\ell$, we consider cyclic compositions $f_{i+\ell-1}\cdots f_i$. Just as an endomorphism is nilpotent if iterated powers eventually vanish, a representation is \defn{nilpotent} if all sufficiently long compositions of consecutive maps vanish.

Write $\mathcal N(\beta)\subseteq\Rep(Q,\beta)$ for the subset of nilpotent representations, and $Z\subseteq\prod_{i=1}^kV_i$ for the set of tuples $(v_1,\ldots,v_k)$ for which some $v_i=0$. Our first main result is Theorem~\ref{thm_nilp_geometric_bijection}, where we construct a bijection
\begin{equation}\label{eqn_intro_main_result}
    \mathcal N(\beta)\times \prod_{i=1}^k V_i\cong\Rep(Q,\beta)\times Z.
\end{equation}
When $k=2$, we have $Z\cong(V_1\cup_0V_2)$. Thus,~\eqref{eqn_intro_main_result} recovers~\eqref{eqn_intro_pairs_bijection}.

Given $\beta,\,\gamma\in\mathbb Z_{\geq 0}^k$, denote the dot product $\beta\cdot\gamma:=\sum_{i=1}^k\beta_i\gamma_i$. Define the cyclic permutation
$\sigma = (12\cdots k)$ in the symmetric group $S_k$, and let $\sigma$ act on $\mathbb Z_{\geq 0}^k$ via
$\sigma \beta = (\beta_2,\ldots, \beta_k,\beta_1)$.
Writing $\mathbf{1} := (1,\ldots, 1)$ for the all-ones vector, we have
\[\beta \cdot \sigma \beta = \sum_{i=1}^k \beta_i \beta_{i+1}=\dim\Rep(Q,\beta),\quad \mathbf{1} \cdot \beta = \sum_{i=1}^k \beta_i=\dim\prod_{i=1}^kV_i.\]
When $\kk$ is the finite field $\mathbb F_q$, we use~\eqref{eqn_intro_main_result} to deduce the cardinality of $\mathcal N(\beta)$ in Theorem~\ref{thm_card_nilp_finite_field}:
\begin{equation}
\label{eqn_intro_cardinality_formula}
     N(\beta) =q^{\beta \cdot \sigma\beta
     - \mathbf{1} \cdot \beta }
    \left(q^{ \mathbf{1} \cdot \beta } -\prod_{i=1}^k(q^{\beta_i}-1)\right).
\end{equation}
Thus, the probability that a representation in $\Rep(Q,\beta)$ is nilpotent is
\begin{equation}
\label{intro_eqn_prob_nil_cyclic_rep}
\Prob(\mcN(\beta)) 
= 1 - \prod_{i=1}^k(1 - q^{-\beta_i}).
\end{equation}
See Proposition~\ref{prop_prob_nil_rep_fin_field} for more detail.

We can also view $\mathcal N(\beta)$ as the nullcone for the action of $\prod_{i=1}^k\GL(V_i)$ on $\Rep(Q,\beta)$ by the change of basis at each vertex. The motives of such nullcones over $\mathbb C$ are studied in~\cite{GR26}. For more on the enumeration of nilpotent representations over finite fields, see~\cite{BSV20}. In the set-theoretic context, eventually constant set-valued representations replace nilpotent representations over $\kk$. Eventually constant set-valued representations of the cyclic quiver are treated in~\cite{GHI26}, and more general quivers are studied in~\cite{GHI26_set_quiver}.

Next, we consider semirepresentations of $Q$ over the Boolean semiring $\mathbb B=\{0,1\}$, where $1+1=1$. Fix $\beta\in\mathbb Z_{\geq 0}^k$ and let $M_1,\ldots,M_k$ be free $\mathbb B$-semimodules of ranks $\beta_1,\ldots,\beta_k$, respectively.
As in the linear setting, the \defn{semirepresentation space} is
$\Rep_{\mathbb B}(Q,\beta):=\prod_{i=1}^k\Hom_{\mathbb B}(M_i,M_{i+1})$,
and a semirepresentation is \defn{nilpotent} if all sufficiently long compositions of consecutive maps vanish. Write $\mathcal N_{\mathbb B}(\beta)$ for the set of nilpotent semirepresentations and set $N_{\B}(\beta):=|\mathcal N_\mathbb B(\beta)|$.

We construct a directed graph $G_\beta$ and identify $\Rep_\mathbb B(Q,\beta)$ with its set of spanning subgraphs. This identification restricts to a bijection $\mathcal N_\mathbb B(\beta)\cong\DAG(G_\beta)$, where $\DAG(G_\beta)$ is the set of spanning directed acyclic subgraphs (DAGs) of $G_\beta$. Therefore,
$N_{\B}(\beta)=|\DAG(G_\beta)|$.
To determine $N_\B(\beta)=|\DAG(G_\beta)|$, we apply inclusion-exclusion over source vertices in $G_\beta$, which recursively expresses $N_{\B}(\beta)$ in terms of classes of spanning DAGs on certain induced subgraphs of $G_\beta$. The induced subgraphs correspond to graphs $G_\gamma$ for strictly smaller dimension vectors $\gamma<\beta$, where
$\gamma<\beta$ if $\gamma_i\leq\beta_i$ for each $1\leq i\leq k$ and $\gamma\neq\beta$. This gives $N_\B(\beta)$ in terms of smaller $N_\B(\gamma)=|\DAG(G_\gamma)|$.
Also, set
\[
\binom{\beta}{\gamma}:=\prod_{i=1}^k\binom{\beta_i}{\gamma_i}.
\]
In Theorem~\ref{thm_boolean_nilp_cardinality_recursion}, we prove that for every nonzero dimension vector $\beta$,
\[N_{\B}(\beta)=\sum_{\gamma<\beta}(-1)^{
\mathbf{1} \cdot (\beta-\gamma) - 1 }
\binom{\beta}{\gamma}2^{ (\beta-\gamma)\cdot \sigma \gamma } N_{\B}(\gamma).\]

In Section~\ref{subsection_rep_quivers}, we give a background on quiver representations over a field and the Boolean semiring. In Sections~\ref{subsection_dimension_vectors} and~\ref{subsection_rank_representations}, we establish notation for dimension vectors, the rank of quiver representations, and $q$-binomial coefficients.
The necessary graph constructions are given in Section~\ref{subsection_directed_walks_graphs}. We introduce nilpotent representations of the cyclic quiver over a field in Section~\ref{subsection_nilp_rep} and relate them to the representation space via an explicit bijection in Section~\ref{subsection_geom_descript_nil_maps}. We determine the cardinality of the nilpotent representations over a finite field in Section~\ref{subsection_enumeration_nil_map_finite_field}, and then enumerate the nilpotent representations by rank using $q$-binomial coefficients in Section~\ref{subsection_enumeration_nilpotent_maps_rank}. Finally, in Section~\ref{subsection_prob_cyclic_quiver_field} we give the probability that a representation of the cyclic quiver is nilpotent.

In Section~\ref{subsection_nilp_semirepresentations}, we derive a recursive formula for the cardinality of the nilpotent semirepresentations of a cyclic quiver over the Boolean semiring. We analyze the probability that a semirepresentation is nilpotent in Section~\ref{subsection_prob_Boolean}. In Section~\ref{section_comparing_set_theoretic_vs_eqns}, 
we relate the probability that a representation of the cyclic quiver over a finite field is nilpotent with the probability that a set-valued representation of the cyclic quiver is eventually constant, as given in \cite[Proposition 3.5]{GHI26}. We also compare the categories of the linear and set-valued representations of the cyclic quiver.

\section*{Acknowledgments}
The authors thank Mikhail Khovanov for productive discussions during the early stages of this paper. 
M.S. Im is grateful to the Institute for Advanced Study in Princeton, NJ for their hospitality and to Purdue University for the excellent working environment.
The authors were partially supported by Simons Collaboration Award 994328. C. Holmes was also partially supported by NSF grant DMS-2428878.

\section{Background}
\label{section_background}

\subsection{Quiver representations}
\label{subsection_rep_quivers}

A \defn{quiver} $Q=(Q_0,Q_1)$ consists of finite sets of \defn{vertices} $Q_0$ and \defn{arrows} $Q_1$. An arrow $\alpha\in Q_1$ is directed from its \defn{tail} $t(\alpha)\in Q_0$ to its \defn{head} $h(\alpha)\in Q_0$.
A \defn{path} $\alpha_{\ell-1}\cdots\alpha_0$ is a composable sequence of arrows.

A \defn{representation} of $Q$ with values in a category $\mathcal C$ consists of a collection of objects in $\mathcal C$ indexed by $Q_0$ and a collection of morphisms in $\mathcal C$ indexed by $Q_1$,
\[\{W_a :  a\in Q_0\},\quad \{W_\alpha:W_{t(\alpha)}\to W_{h(\alpha)} : \alpha\in Q_1\}.\]
When $\mathcal C$ has zero morphisms, a representation $W$ is \defn{nilpotent} if for all sufficiently long paths $\alpha_{\ell-1}\cdots\alpha_0$, the composition $W_{\alpha_{\ell-1}}\circ \cdots \circ W_{\alpha_0}$ is the zero morphism.

Alternatively, $Q$ determines a \defn{path category}: objects are vertices in $Q_0$ and morphisms are paths. We include the trivial path of length $0$ at each vertex $a\in Q_0$ to serve as the identity on $a$, and take composition as path concatenation. Then, representations of $Q$ can be viewed as functors from the path category to $\mathcal C$.

Throughout, we fix $k\geq 2$ and take all indices modulo $k$. We study the nilpotent representations of a fixed quiver $Q$ over two distinct categories.
Let $Q$ be the cyclic quiver with vertex set $Q_0=[k]=\{1,\ldots,k\}$ and arrows
\[
1 \xrightarrow{\alpha_1} 2 \xrightarrow{\alpha_2} \ldots \xrightarrow{\alpha_{k-2}}k-1\xrightarrow{\alpha_{k-1}}k\xrightarrow{\alpha_k}1.
\]

In Section~\ref{section_nilpot_rep_field} we consider the usual representations of $Q$ over a field $\kk$, those with values in the category of finite-dimensional $\kk$-vector spaces. Let $\beta=(\beta_1,\ldots,\beta_k)\in\mathbb Z_{\geq 0}^k$ be a \defn{dimension vector} of $Q$ and fix vector spaces $V_1,\ldots,V_k$ over $\kk$ of dimension $\dim V_i:=\beta_i$. Let $\Rep(Q,\beta)$ denote the set of representations $W$ that assign each vertex $i\in[k]$ to the vector space $V_i$. Each $W\in\Rep(Q,\beta)$ is determined by the $k$-tuple $(W_{\alpha_1},\ldots,W_{\alpha_k})\in\prod_{i=1}^k\Hom(V_i,V_{i+1})$, so we identify $\Rep(Q,\beta)$ with this product,
\[\Rep(Q,\beta):=\prod_{i=1}^k\Hom(V_i,V_{i+1}).\]

In Section~\ref{section_nilp_rep_Boolean} we consider the Boolean semiring $\mathbb B=\{0,1\}$, where $1+1=1$. We study \defn{semirepresentations} of $Q$ over $\mathbb B$, representations of $Q$ with values in the category of free $\mathbb B$-semimodules of finite rank. For each dimension vector $\beta\in\mathbb Z_{\geq 0}^k$, fix free $\mathbb B$-semimodules $M_1,\ldots,M_k$ with $\rank \: M_i=\beta_i$. As in the linear context, the set of semirepresentations with dimension vector $\beta$ identifies with the product
\[\Rep_{\mathbb B}(Q,\beta):=\prod_{i=1}^k\Hom_{\mathbb B}(M_i,M_{i+1}).\]

Concretely, we write (semi)representations $f$ in $\Rep(Q,\beta)$ or $\Rep_\mathbb B(Q,\beta)$ as
$f=(f_1,\ldots,f_k)$, with $f_i:V_i\to V_{i+1}$ or $f_i:M_i\to M_{i+1}$, respectively. In both contexts, $f$ is nilpotent if for each $1\leq i\leq k$ and sufficiently large $\ell\in \mathbb N$, the composition $f_{{i+\ell-1}}\circ\dots\circ f_{i}$ is zero. 
Note that $f$ is nilpotent if and only if $f_k\circ\cdots\circ f_1$ is nilpotent as an endomorphism of $V_1$ or $M_1$.
Define the subsets of nilpotent representations over $\kk$ and $\mathbb B$ as
\[
\mathcal N(\beta)\subseteq\Rep(Q,\beta),\quad \mathcal N_{\mathbb B}(\beta)\subseteq\Rep_\mathbb B(Q,\beta).
\]
Abbreviate the cardinalities as 
$N(\beta):= |\mathcal N(\beta)| $ and 
$N_{\B}(\beta):=|\mathcal N_{\mathbb B}(\beta)|$.

\subsection{Dimension vectors}
\label{subsection_dimension_vectors}

The operations in this section are used throughout for cardinality results.

Given $\beta,\,\gamma\in\mathbb Z_{\geq 0}^k$, denote the dot product $\beta\cdot\gamma:=\sum_{i=1}^k\beta_i\gamma_i$. Let 
$\sigma = (1\cdots k)$ be the cyclic permutation in the symmetric group $S_k$, and let $\sigma$ act on $\mathbb Z_{\geq 0}^k$ via
$\sigma \beta = (\beta_2,\ldots, \beta_k,\beta_1)$. Also, let $\mathbf 1:=(1,\ldots,1)\in\mathbb Z_{\geq 0}^k$. This gives
\[\beta\cdot\sigma\gamma=\sum_{i=1}^k\beta_i\gamma_{i+1},\quad \mathbf 1\cdot\beta=\sum_{i=1}^k\beta_i.\]

We order dimension vectors $\beta,\,\gamma\in\mathbb Z_{\geq 0}^k$ componentwise so that $\gamma\leq\beta$ if $\gamma_i\leq\beta_i$ for each $1\leq i\leq k$, and $\gamma<\beta$ if additionally $\gamma\neq\beta$. If $\gamma\leq\beta$, we extend the standard binomial coefficients $\binom{n}{r}$ by
\[\binom{\beta}{\gamma}:=\prod_{i=1}^k\binom{\beta_i}{\gamma_i}.\]

\subsection{Rank of representations and \texorpdfstring{$q$}{q}-binomial coefficients}
\label{subsection_rank_representations} 
We use these definitions in Section~\ref{subsection_enumeration_nilpotent_maps_rank}.
Consider representations of the cyclic quiver $Q$ over the finite field $\mathbb F_q$.

For each representation $f=(f_1,\ldots,f_k)\in\Rep(Q,\beta)$ and each $1\leq i<k$, the composition $f_i\cdots f_1$ is a linear map $V_1\to V_{i+1}$. So, $f$ induces the vector
\[\rank \: f:=(\beta_1,\rank (f_1),\rank (f_2 f_1),\ldots,\rank (f_{k-1}\cdots f_1))\in\mathbb Z^k_{\geq 0}.\]
Let $\mathcal R(\beta):=\{\rank \: f\in\mathbb Z^k_{\geq 0} : f\in\Rep(Q,\beta)\}$ and for each $\gamma\in\mathcal R(\beta)$, let \[\mathcal N_\gamma(\beta):=\{f\in\mathcal N(\beta) : \rank \: f=\gamma\}.\]
Observe that $\gamma\in\mathcal R(\beta)$ if and only if $\gamma_1=\beta_1$, $\gamma_{i+1}\leq\gamma_i$ for each $1\leq i<k$, and $\gamma\leq\beta$.

For integers $0\leq r\leq n$, the $q$-binomial coefficient is defined by
\[
\qbinom{n}{r}{q} := \frac{(1-q^n)(1-q^{n-1})\cdots (1-q^{n-r+1})}{(1-q^r)\cdots (1-q^2)(1-q)}.
\]
Equivalently, taking $[n]_q:=\frac{1-q^n}{1-q}$ and $[n]_q!:=[1]_q\cdots[n]_q$, we have
\begin{equation*}
    \qbinom{n}{r}{q} = \frac{[n]_q!}{[r]_q! [n-r]_q!}.
\end{equation*}
We define the $q$-binomial coefficient for $\gamma\leq\beta\in\mathbb Z_{\geq 0}^k$ in the same way as for the standard binomial coefficient,
\[\qbinom{\beta}{\gamma}{q}:=\prod_{i=1}^k\qbinom{\beta_i}{\gamma_i}{q}.
\]

We also have 
\begin{equation}
\label{eqn_GLr_Fq}
|\GL_r(\F_q)| = \prod_{i=0}^{r-1} (q^r - q^i) 
\end{equation}
since an $r\times r$ matrix is invertible if and only if its columns are linearly independent. We build a matrix in $\GL_r(\F_q)$ one column at a time, resulting in~\eqref{eqn_GLr_Fq}.

\subsection{Directed walks and graphs}
\label{subsection_directed_walks_graphs}
We use these definitions in  Section~\ref{section_nilp_rep_Boolean}.

A \defn{directed graph} $D$ consists of a \defn{vertex set} $V(D)$, and an (oriented) \defn{edge set} $E(D)\subseteq V(D)\times V(D)$. The edge $(u,v)\in E(D)$ has \defn{source} $u$ and \defn{target} $v$. We consider only finite directed graphs without parallel edges.

The \defn{union} of directed graphs $D$ and $D'$, $D\cup D'$, is given by 
\[V(D\cup D') := V(D)\cup V(D'),\quad E(D\cup D'):=E(D)\cup E(D').\]
The graph $D'$ is a \defn{subgraph} of $D$ if $V(D')\subseteq V(D)$ and $E(D')\subseteq E(D)$; it is a \defn{spanning subgraph} if $V(D')=V(D)$. For $U\subseteq V(D)$, let $D[U]$ denote the subgraph \defn{induced} by $U$, with vertex set $U$ and edge set $E(D)\cap (U\times U)$.

A \defn{directed walk} in $D$ of length $m\geq 1$ is a sequence of successive edges in $E(D)$,
\[(u_0,u_1),\,(u_1,u_2),\,\ldots,\,(u_{m-1},u_{m}).\]
The walk has \defn{source} $u_0$ and \defn{target} $u_{m}$. A \defn{directed acyclic graph} (DAG) is a directed graph $D$ with no closed walks, i.e., no walks with the same source and target. Since $D$ is finite, $D$ is acyclic if and only if its directed walks are bounded in length.
A vertex $u\in V(D)$ is a \defn{source} if it is not the target of any edge $e\in E(D)$. Let $\Src(D)\subseteq V(D)$ denote the set of source vertices of $D$.

Let $\DAG(D)$ be the set of spanning directed acyclic subgraphs of $D$.

\section{Nilpotent representations of cyclic quivers}
\label{section_nilpot_rep_field}

In Sections~\ref{subsection_geom_descript_nil_maps}, \ref{subsection_enumeration_nil_map_finite_field}, \ref{subsection_enumeration_nilpotent_maps_rank}, and \ref{subsection_prob_cyclic_quiver_field}, we assume that $Q$ is the equioriented cyclic quiver.

\subsection{Nilpotent representations}
\label{subsection_nilp_rep}
Let $V$ and $V'$ be finite-dimensional vector spaces with fixed ordered bases over a field $\kk$.

For each subspace $U\subseteq V$, the ordered basis of $V$ determines
an ordered basis of $U$ and a complementary subspace $U^{\c}$ such that $V=U\oplus U^{\c}$. We may take, for instance, the reduced row-echelon basis of $U$, and let $U^{\c}$ be the subspace spanned by the basis vectors of $V$ corresponding to the nonpivot columns.
    Applying~\cite[Theorem 5]{Lei21} with this data also produces a bijection
    \begin{equation}\label{eqn_Leinster_bijection}
        \mcN(U)\times U\cong\End(U),
    \end{equation}
where $\mcN(U)$ is the set of nilpotent endomorphisms of $U$.

Throughout, for each vector space with a fixed ordered basis, we implicitly fix a basis, a complement, and the bijection~\eqref{eqn_Leinster_bijection} for each of its subspaces.

\begin{lemma}\label{lmm_right_inverse_induced_bijection}
    Suppose $h:V\to V'$ and $g:V'\to V$ are linear maps and $U\subseteq V'$ is a subset. If $g$ is a right inverse of $h$, meaning $hg=\id_{V'}$, then $h$ and $g$ determine a bijection
    \[
    \varphi: h^{-1}(U)\xrightarrow\simeq\ker h\times U,\quad v\mapsto (v-gh(v),h(v)).
    \]
    In particular, we have $V\cong\ker h\times V'$.
\end{lemma}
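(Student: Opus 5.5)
The plan is to write down an explicit inverse to $\varphi$ and check both composites, using only the identity $hg=\id_{V'}$. First I check that $\varphi$ lands in $\ker h\times U$. For $v\in h^{-1}(U)$ we have $h(v)\in U$ by definition. Also
\[
h(v-gh(v))=h(v)-(hg)(h(v))=h(v)-h(v)=0,
\]
so $v-gh(v)\in\ker h$.

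Next I define the candidate inverse
\[
\psi:\ker h\times U\to h^{-1}(U),\quad (w,u)\mapsto w+g(u).
\]
It lands in $h^{-1}(U)$ because $h(w+g(u))=h(w)+hg(u)=0+u=u\in U$.

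Then I verify that the two composites are identities.
\begin{itemize}
\item For $\psi\varphi$: $\psi\varphi(v)=v-gh(v)+gh(v)=v$.
\item For $\varphi\psi$: $\varphi(w+g(u))=\bigl(w+g(u)-g(u),\,u\bigr)=(w,u)$. Here I use $h(w+g(u))=u$, just computed.
\end{itemize}
Hence $\varphi$ is a bijection with inverse $\psi$.

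For the final assertion I take $U=V'$, so that $h^{-1}(V')=V$. This gives the bijection $V\cong\ker h\times V'$. Both $\varphi$ and $\psi$ are linear when $U$ is a subspace, so in that case this is even an isomorphism of vector spaces, though only a bijection is claimed.

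There is no real obstacle here. The only point needing care is that $U$ is merely a subset, so linearity of $\varphi$ restricted to $h^{-1}(U)$ is never used. Only the pointwise formulas and $hg=\id_{V'}$ enter the argument.
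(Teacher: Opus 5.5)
Your proof is correct and follows essentially the same route as the paper: you construct the explicit inverse $\psi(w,u)=w+g(u)$, check that both maps are well defined using $hg=\id_{V'}$, and verify both composites. Your specialization to $U=V'$ for the final claim is exactly the step the paper leaves implicit.
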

\begin{proof}
    For each $v\in h^{-1}(U)$, $v-gh(v)\in\ker h$ since $h(v-gh(v))=h(v)-h(v)=0$. The inverse map is given by $\psi: (v,u)\mapsto v+g(u)$. 
    It is well-defined since for $(v,u)\in \ker h\times U$, 
    $h(v+g(u)) = h(v)+ hg(u) = u$.

    Now for $v\in h^{-1}(U)$, we have 
    $\psi\varphi(v) = \psi(v-gh(v), h(v)) = v-gh(v) +gh(v) = v$. Conversely, for $(v,u)\in \ker h\times U$, 
    we have $\varphi \psi(v,u) = 
    \varphi(v+g(u)) = (v+g(u) - gh(v+g(u) ), h(v+g(u))) = (v + g(u) -g(u), u) = (v,u) $ since $h(v)=0$.
\end{proof}

\begin{corollary}\label{cor_hom_evaluation_bijection}
    Each pair of elements $v\in V\setminus\{0\}$ and $w\in V'$ determines a bijection
    \[\Hom(V,V')\cong\{h\in\Hom(V,V') : h(v)=w\}\times V'.\]
\end{corollary}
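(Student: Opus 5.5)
The plan is to deduce the corollary from Lemma~\ref{lmm_right_inverse_induced_bijection}, applied to the evaluation map at $v$.

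Consider the linear map
\[
\ev_v:\Hom(V,V')\to V',\qquad h\mapsto h(v).
\]
For the lemma we need a right inverse $g:V'\to\Hom(V,V')$ of $\ev_v$. Since $v\neq 0$, the fixed ordered basis of $V$ gives the subspace $U=\kk v$ a distinguished basis and a complement $U^{\c}$ with $V=\kk v\oplus U^{\c}$. Let $\pi:V\to\kk$ be the coordinate functional that sends $v\mapsto 1$ and kills $U^{\c}$. Then define
\[
g(w')=\pi(\cdot)\,w',
\]
that is, the rank-at-most-one map $x\mapsto\pi(x)w'$. This $g$ is linear in $w'$, and $\ev_v g(w')=\pi(v)w'=w'$, so $\ev_v\circ g=\id_{V'}$.

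Apply Lemma~\ref{lmm_right_inverse_induced_bijection} with $h=\ev_v$, this $g$, and the subset $U=V'$ of $V'$. This gives a bijection
\[
\Hom(V,V')\cong\ker\ev_v\times V',
\]
where $\ker\ev_v=\{h : h(v)=0\}$.

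It remains to pass from the case $w=0$ to arbitrary $w$. The fiber $\{h : h(v)=w\}$ is the translate $g(w)+\ker\ev_v$, so the map $h\mapsto h-g(w)$ is a bijection from $\{h : h(v)=w\}$ onto $\ker\ev_v$. Composing gives
\[
\Hom(V,V')\cong\{h : h(v)=w\}\times V'.
\]
Explicitly, the map is
\[
h\mapsto\bigl(h-g(h(v))+g(w),\,h(v)\bigr).
\]

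This map depends only on $v$, $w$ and the fixed choices of bases and complements, as the statement requires. None of the steps is a real obstacle. The one point needing care is that $g$ must be linear and canonically determined by the fixed data, which the choice of complement of $\kk v$ guarantees, and this is exactly where $v\neq0$ is used.
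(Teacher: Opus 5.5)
Your proposal is correct and follows the paper's own proof: you build the right inverse $g$ of $\ev_v$ by sending $w'$ to the map that takes $v\mapsto w'$ and kills a fixed complement of $\kk v$, then apply Lemma~\ref{lmm_right_inverse_induced_bijection} to get $\Hom(V,V')\cong\ker\ev_v\times V'$. You then translate by $g(w)$ exactly as the paper does, and your explicit formula for the composite bijection is a correct bonus.
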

\begin{proof}
    Let $\ev_v:\Hom(V,V')\to V'$ be the evaluation map, $f\mapsto f(v)$. The evaluation map admits a right inverse $g:V'\to\Hom(V,V')$.
    Choose a complementary subspace $\langle v\rangle^{\c}$ such that $\langle v\rangle\oplus \langle v\rangle^{\c} = V$.  Assign each vector $w'\in V'$ to the unique map $g(w'): V\to V'$ that takes $v$ to $w'$ and kills $\langle v\rangle^{\c}$. That is, 
    $g(w')(v) = w'$ and $g(w')|_{\langle v\rangle^{\c}} = 0$.

    By Lemma~\ref{lmm_right_inverse_induced_bijection}, the maps $\ev_v$ and $g$ yield a bijection
    \begin{equation}
        \Hom(V,V')\cong\{h\in\Hom(V,V') : h(v)=0\}\times V'.
    \end{equation}
    The result follows by translation, since
    \[
    \{h\in\Hom(V,V'):  h(v)=0\}+g(w)=\{h\in\Hom(V,V') : h(v)=w\}.\qedhere
    \]
\end{proof}

For each linear map $h\in\Hom(V,V')$, define the set
\[\mathcal N_h:=\{g\in\Hom(V',V): gh\in\mathcal N(V)\}.\]

\begin{lemma}\label{lmm_composite_nilpotent}
Suppose $h:V\to V'$ is a linear map. Then $h$ determines a bijection
\begin{equation}\label{eqn_composite_nilpotent}
    \mathcal N_h\times V\cong \ker h\times\Hom(V',V).
\end{equation}
\end{lemma}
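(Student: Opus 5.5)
The plan is to reduce to Leinster's bijection~\eqref{eqn_Leinster_bijection} for the image $U:=\im h\subseteq V'$, after splitting $\Hom(V',V)$ along the decompositions fixed by the chosen bases. Set $K:=\ker h$ and use the fixed complements $V=K\oplus K^{\c}$ and $V'=U\oplus U^{\c}$. Then $h$ restricts to an isomorphism $\bar h:K^{\c}\xrightarrow{\simeq}U$. Lemma~\ref{lmm_right_inverse_induced_bijection} applies to $h:V\to U$ with right inverse $\bar h^{-1}$, and gives $V\cong K\times U$.

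\textbf{Step 1: the nilpotency condition only involves $g|_U$.} Since $gh$ is nilpotent if and only if $hg\in\End(V')$ is nilpotent, we may work with $hg$. Its image lies in $U$, and
\[
(hg)^{m+1}=(hg|_U)^m\circ hg .
\]
Hence $hg$ is nilpotent if and only if $e_g:=h\circ g|_U\in\End(U)$ is nilpotent.

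\textbf{Step 2: coordinatize $\Hom(V',V)$.} Write each $g$ as the triple
\[
\bigl(g|_{U^{\c}},\; \pi_K g|_U,\; \pi_{K^{\c}} g|_U\bigr)\in \Hom(U^{\c},V)\times\Hom(U,K)\times\Hom(U,K^{\c}),
\]
where $\pi_K,\pi_{K^{\c}}$ are the projections for $V=K\oplus K^{\c}$. This gives a bijection $\Hom(V',V)\cong\Hom(U^{\c},V)\times\Hom(U,K)\times\Hom(U,K^{\c})$. Composing with $\bar h$ identifies $\Hom(U,K^{\c})\cong\End(U)$, and under this identification the third coordinate becomes exactly $e_g$, because $h$ kills $K$. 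Combining with Step 1,
\[
\mathcal N_h\cong\Hom(U^{\c},V)\times\Hom(U,K)\times\mcN(U).
\]

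\textbf{Step 3: assemble.} Using $V\cong K\times U$ and then~\eqref{eqn_Leinster_bijection} for $U$, we get
\[
\mathcal N_h\times V\cong\Hom(U^{\c},V)\times\Hom(U,K)\times\bigl(\mcN(U)\times U\bigr)\times K\cong\Hom(U^{\c},V)\times\Hom(U,K)\times\End(U)\times K.
\]
Running Step 2 backwards turns the first three factors into $\Hom(V',V)$. This yields $\mathcal N_h\times V\cong\ker h\times\Hom(V',V)$.

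The main point needing care is Step 1 together with the claim in Step 2 that the third coordinate is precisely $e_g$. One must check that $e_g$ is independent of $\pi_K g|_U$ and of $g|_{U^{\c}}$, so that nilpotency constrains only one factor of the product. Everything else is a composition of explicit bijections determined by the fixed bases and complements.
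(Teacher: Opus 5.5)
Your proposal is correct and is essentially the paper's argument transported along the isomorphism $\bar h: V/\ker h\cong\im h$. The paper's endomorphism $\alpha(g)$ of $V/\ker h$, defined by $\alpha(g)\pi=\pi gh$, is conjugate to your $e_g=h\circ g|_{\im h}$; its factor $\ker\alpha=\{g : g(\im h)\subseteq\ker h\}$ corresponds exactly to your $\Hom(U^{\c},V)\times\Hom(U,K)$; and it likewise finishes with $V\cong\ker h\times V/\ker h$ and Leinster's bijection. The differences are minor: you get the nilpotency criterion from the swap $gh\leftrightarrow hg$ rather than through the quotient map, and you apply Leinster to the subspace $\im h\subseteq V'$ (which fits the paper's fixed-complement conventions directly) instead of to $V/\ker h$ with the basis pushed forward from $(\ker h)^{\c}$.
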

\begin{proof}
Write $H:=\ker h$. Let $\pi:V\to V/H$ be the quotient map, and let $p:V'\to \im h$ be the projection that kills the complement $(\im h)^{\c}$.

We construct a linear map $\alpha$ and a right inverse $\beta$,
\[\alpha:\Hom(V',V)\to\End(V/H),\quad\beta:\End(V/H)\to \Hom(V',V).\]
For each $g\in \Hom(V',V)$, the composition $\pi gh\in\Hom(V,V/H)$ factors uniquely through $V/H$ since $H\subseteq\ker (\pi gh)$. Let $\alpha(g)\in\End(V/H)$ be the unique map such that
\begin{equation}\label{eqn_universal_property_quotient}
\alpha(g)\pi=\pi gh.
\end{equation}
The quotient map restricts to an isomorphism, $\overline\pi:H^{\c} \xrightarrow\simeq V/H$. Also, $ph:V\to\im h$ induces the isomorphism $\overline h:V/H\xrightarrow\simeq\im h$, where $\overline h$ is the unique map such that $\overline h \pi=ph$. For each $g'\in\End (V/H)$, let $\beta(g')\in\Hom(V',V)$ be the composition
\begin{equation}\label{eqn_beta_definition}
    V'\xrightarrow p\im h\xrightarrow{\overline h^{-1}}V/H\xrightarrow {g'}V/H\xrightarrow{{\overline\pi}^{-1}}H^{\c} \hookrightarrow V.
\end{equation}

To see that $\beta$ is a right inverse of $\alpha$, fix any $g'\in\End(V/H)$. Then $g'=\alpha(\beta(g'))$, since $g'$ satisfies~\eqref{eqn_universal_property_quotient} with respect to $\beta(g')$,
\[\pi \beta(g')  h=\pi{\overline\pi}^{-1}\circ g'\circ \overline h^{-1}ph=g'\pi.\]
Also, $\mathcal N_h=\alpha^{-1}(\mathcal N(V/H))$.
For each map $g\in\Hom(V',V)$,~\eqref{eqn_universal_property_quotient} implies $\alpha(g)^m\circ\pi=\pi\circ (gh)^m$ for each $m\in\mathbb N$. If $gh$ is nilpotent, then $\alpha(g)^m\circ\pi=0$ for some $m\in\mathbb N$. Thus $\alpha(g)^m=0$ since $\pi$ is surjective, so $\alpha(g)$ is nilpotent. Conversely, if $\alpha(g)$ is nilpotent, then $\pi\circ (gh)^m=0$ for some $m\in\mathbb N$, so \[\im((gh)^m)\subseteq\ker\pi=H=\ker h\subseteq\ker (gh).\]
Thus $(gh)^{m+1}=0$, and $gh$ is nilpotent.

By Lemma~\ref{lmm_right_inverse_induced_bijection}, $\alpha$ and $\beta$ induce bijections
\[\mathcal N_h\cong \ker\alpha\times\mathcal N(V/H),\quad \Hom(V',V)\cong \ker\alpha\times\End(V/H),\]
since $\alpha^{-1}(\mathcal N(V/H))=\mathcal N_h$ and $\alpha^{-1}(\End(V/H))=\Hom(V',V)$. Similarly, the quotient map $\pi:V\to V/H$ and its right inverse $({\overline\pi})^{-1}:V/H\to H^{\c} \hookrightarrow V$ yield a bijection $V\cong H\times V/H$. Finally, ${\overline\pi}:H^{\c} \to V/H$ pushes the ordered basis of $H^{\c}$ to an ordered basis of $V/H$, which specifies a bijection $\mathcal N(V/H)\times V/H\cong\End(V/H)$ by~\cite[Theorem 5]{Lei21}. Combining these bijections gives~\eqref{eqn_composite_nilpotent},
\begin{align*}
    \mathcal N_h\times V&\cong\big(\ker\alpha\times\mathcal N(V/H)\big)\times (V/H\times H)\\
    &\cong H\times\Big(\ker\alpha\times\big (\mathcal N(V/H)\times V/H\big)\Big)\\
    &\cong H\times\big(\ker\alpha\times\End(V/H)\big)\\
    &\cong H\times \Hom(V',V).\qedhere
\end{align*}
\end{proof}

\subsection{Geometric description of nilpotent maps}
\label{subsection_geom_descript_nil_maps}

Recall the notation in Section~\ref{subsection_rep_quivers}, and let $Q$ be the cyclic quiver. We fix a dimension vector $\beta\in\mathbb Z_{\geq 0}^k$ and vector spaces $V_1,\ldots,V_k$ over a field $\kk$ with $\dim V_i=\beta_i$ for each $1\leq i\leq k$. Let each vector space $V_i$ have an ordered basis.

We introduce two auxiliary sets $X$ and $Y$, and a set $Z$ that features in Theorem~\ref{thm_nilp_geometric_bijection}.
Let $X:=\prod_{i=1}^{k-1}\Hom(V_i,V_{i+1})$ and observe that $\Rep(Q,\beta)=X\times\Hom(V_k,V_1)$. Then writing
 $h$ for $(h_1,\ldots, h_{k-1})\in X$ and $v$ for $(v_1,\ldots, v_k) \in \prod_{i=1}^k V_i$, define
\[Y:=\{(h,v)\in X\times \prod_{i=1}^k V_i : v_1\in\ker\!(h_{k-1}\cdots h_1)\}.\]
Finally, let
$Z\subseteq \prod_{i=1}^k V_i$ be the subset of tuples $v=(v_1,\ldots,v_k)$ for which some $v_i=0$. Equivalently, $Z$ is the union
\[Z=\bigcup_{j=1}^k\left(\prod_{i<j}V_i\times\{0\}\times\prod_{l>j} V_l\right)\subseteq\prod_{i=1}^kV_i.\]

\begin{lemma}\label{lmm_X_Y_Z_bijection}
There is a bijection
      $X\times Z\cong Y$.
\end{lemma}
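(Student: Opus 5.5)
The plan is to stratify both sides by a ``first vanishing index'' and to build the bijection stratum by stratum, using the translation trick from Corollary~\ref{cor_hom_evaluation_bijection}. For $(h,v)\in Y$, consider the orbit of $v_1$ under the chain of maps: $w_1:=v_1$ and $w_{i+1}:=h_i(w_i)$ for $1\leq i\leq k-1$. The defining condition of $Y$ says exactly that $w_k=0$. So there is a smallest index $j\in[k]$ with $w_j=0$, and $Y=\bigsqcup_{j=1}^k Y_j$, where $Y_j$ consists of those $(h,v)$ with $w_1,\ldots,w_{j-1}\neq 0$ and $w_j=0$. Likewise $Z=\bigsqcup_{j=1}^k Z_j$, where $Z_j$ is the set of $z$ with $z_1,\ldots,z_{j-1}\neq 0$ and $z_j=0$. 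It then suffices to give bijections $X\times Z_j\cong Y_j$ for each $j$. For $j=1$ this is the identity, since both sides are $\{v_1=0\}$ with all $h_i$ and $v_2,\ldots,v_k$ free.

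For $j\geq 2$, I use the following fact. For each nonzero $u\in V_i$, the proof of Corollary~\ref{cor_hom_evaluation_bijection} gives a linear right inverse $g_u:V_{i+1}\to\Hom(V_i,V_{i+1})$ of $\ev_u$; it depends only on $u$ and the fixed ordered basis. Lemma~\ref{lmm_right_inverse_induced_bijection} then yields
\[
h\mapsto \bigl(h-g_u(h(u)),\,h(u)\bigr),\qquad \Hom(V_i,V_{i+1})\cong\{h:h(u)=0\}\times V_{i+1}.
\]

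Define $\Phi:X\times Z_j\to Y_j$ as follows. Take $(h',z)$ with $z_1,\ldots,z_{j-1}\neq 0$ and $z_j=0$.
\begin{itemize}
\item Put $v_1:=z_1$, $v_l:=z_l$ for $l>j$, and $h_i:=h'_i$ for $i\geq j$.
\item For $1\leq i\leq j-1$, let $z_i$ play the role of $w_i$. Split $h'_i$ via $u=z_i$ as $h'_i=h_i^0+g_{z_i}(v_{i+1})$, where $h_i^0(z_i)=0$ and $v_{i+1}:=h'_i(z_i)\in V_{i+1}$. Then set
\[
h_i:=h_i^0+g_{z_i}(z_{i+1}).
\]
So $h_i$ is the unique map agreeing with $h_i^0$ off $\langle z_i\rangle$ in the fixed complement, and sending $z_i\mapsto z_{i+1}$.
\end{itemize}
This construction forces $h_i(z_i)=z_{i+1}$ for $i<j$. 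Hence the chain starting at $v_1=z_1$ is $w_i=z_i$, which is nonzero for $i<j$ and zero at $i=j$, so $\Phi(h',z)\in Y_j$.

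The inverse runs the same recipe backwards. From $(h,v)\in Y_j$, compute the chain $w_i$ and set $z_i:=w_i$ for $i<j$, $z_j:=0$, and $z_l:=v_l$ for $l>j$. For $i<j$, set
\[
h'_i:=h_i-g_{w_i}(w_{i+1})+g_{w_i}(v_{i+1}),
\]
and $h'_i:=h_i$ for $i\geq j$. The only bookkeeping point is that the coordinates $v_2,\ldots,v_j$ of $Y_j$ are absorbed into the $V_{i+1}$-factors of $h'_1,\ldots,h'_{j-1}$. Meanwhile the chain values $w_2,\ldots,w_j$, which are determined by $(h,v_1)$, become the coordinates $z_2,\ldots,z_j$; this is why the dimension count matches.

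The main point to check carefully is well-definedness of the inverse. The maps $g_{w_i}$ depend on $w_i$, which must be recoverable from the other side: on $Y_j$ it is read off from $(h,v_1)$, and on $X\times Z_j$ it equals $z_i$, which is never touched by the modification. Given this, the two constructions are mutually inverse, because each step is the translation bijection of Lemma~\ref{lmm_right_inverse_induced_bijection} applied with a fixed $u$. Taking the union over $j$ gives $X\times Z\cong Y$.
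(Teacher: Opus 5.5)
Your proposal is correct and takes essentially the same approach as the paper. The paper partitions $X\times Z$ into slices $X\times\{v\}$ and $Y$ into pieces $Y_v$ indexed by $v\in Z$; grouping these by the first vanishing index $z(v)$ gives your strata $Z_j$ and $Y_j$. On each slice the paper applies the bijection of Corollary~\ref{cor_hom_evaluation_bijection} to $h_1,\ldots,h_{z(v)-1}$, and this is exactly the translation map $h'_i\mapsto\bigl(h'_i-g_{z_i}(h'_i(z_i))+g_{z_i}(z_{i+1}),\,h'_i(z_i)\bigr)$ that you write out explicitly.
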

\begin{proof}
For each $v\in Z$, let $z(v):=\min\{i\in[k] : v_i=0\}$. Define a subset $Y_v\subseteq Y$ by
\[
Y_v := \{h\in X : h_i(v_i)=v_{i+1}\text{ for }i<z(v)\} 
\times
\{ v_1 \}\times\prod_{i=2}^{z(v)}V_i\times\{(v_{z(v)+1},\ldots,v_k)\}.
\]
This can be rewritten as 
\begin{equation*}
\begin{split}
Y_v &= \{h\in X : h_i(v_i)=v_{i+1}\text{ for }i<z(v)\} \times \\
& \hspace{5mm} 
\{ (u_1,\ldots, u_k):  u_1 = v_1, u_i \in V_i\mbox{ for }1 < i \leq z(v), u_i = v_i \mbox{ for } i > z(v)\}.
\end{split}
\end{equation*}
For every $(h,u)\in Y$, there exists a unique $v\in Z$ such that $(h,u)\in Y_v$. Take $w:=(u_1,h_1(u_1),\ldots,h_{k-1}\cdots h_1(u_1))$. Since $u_1\in \ker\!(h_{k-1}\cdots h_1)$, 
we see that the $k$-th component of $w$ is 
$w_k=0$. 
So $w\in Z$. Then $(h,u)\in Y_v$, where $v:=(w_1,\ldots,w_{z(w)},u_{z(w)+1},\ldots,u_k)$. For uniqueness, suppose $(h,u)\in Y_{v'}$. 
Then $v_1'=u_1 = w_1$. 
If $i<z(v')$, then 
$v_{i+1}' = h_i(v_i') = h_i(w_i) = w_{i+1}$.
By induction, $v'_i=w_i$ for $i\leq z(v')$. Hence, $z(v')=z(w)$, and $v'_i=u_i$ for $i>z(v')$, giving $v=v'$.

Since the subsets $X\times\{v\}$ partition $X\times Z$, and the subsets $Y_v$ partition $Y$, it suffices to construct a bijection $X\times\{v\}\cong Y_v$ for each $v\in Z$.

For $i < z(v)$, $v_i\neq 0$, so Corollary~\ref{cor_hom_evaluation_bijection} gives a bijection
\begin{equation}\label{eqn_hom_set_bijection}
    \Hom(V_i,V_{i+1})\cong\{h\in\Hom(V_i,V_{i+1}) : h(v_i)=v_{i+1}\}\times V_{i+1}.
\end{equation}
Since $X=\prod_{i=1}^{z(v)-1}\Hom(V_i,V_{i+1})\times\prod_{l=z(v)}^{k-1}\Hom(V_l,V_{l+1})$,
we see that~\eqref{eqn_hom_set_bijection} induces a bijection between $X\times\{v\}$ and
\begin{equation}\label{eqn_intermediate_slice}
\begin{aligned}
    \prod_{i=1}^{{z(v)}-1}&\Big(\{h_i\in\Hom(V_i,V_{i+1}) : h_i(v_i)=v_{i+1}\}\times V_{i+1}\Big)\times\prod_{l={z(v)}}^{k-1}\Hom(V_l,V_{l+1})\\
    &\cong\{h\in X : h_i(v_i)=v_{i+1}\text{ for }i<z(v)\}\times\prod_{i=2}^{z(v)}V_i.
\end{aligned}
\end{equation}
This set identifies with $Y_v$ after adding the fixed entries $\{v_1\}\times\{(v_{z(v)+1},\ldots,v_k)\}.$
\end{proof}

Recall that for a linear map $h\in\Hom(V_1,V_k)$,
\[\mathcal N_h=\{g\in\Hom(V_k,V_1): gh\in\mathcal N(V_1)\}.\]

\begin{theorem}
\label{thm_nilp_geometric_bijection}
There is a bijection
\begin{equation}
\label{eqn_bijection_nilp_field}
    \mathcal N(\beta)\times \prod_{i=1}^k V_i\cong\Rep(Q,\beta)\times Z.
\end{equation}
\end{theorem}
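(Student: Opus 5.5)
The plan is to fibre everything over $X=\prod_{i=1}^{k-1}\Hom(V_i,V_{i+1})$, apply Lemma~\ref{lmm_composite_nilpotent} fibrewise, and then use Lemma~\ref{lmm_X_Y_Z_bijection} to trade $Y$ for $X\times Z$.

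\textbf{Step 1: reduce nilpotence to a single composite.} Write a representation as $f=(h,g)$ with $h=(h_1,\ldots,h_{k-1})\in X$ and $g=f_k\in\Hom(V_k,V_1)$. Set $\bar h:=h_{k-1}\cdots h_1\in\Hom(V_1,V_k)$. As noted in Section~\ref{subsection_rep_quivers}, $f$ is nilpotent if and only if $g\bar h$ is nilpotent in $\End(V_1)$. Hence
\[
\mathcal N(\beta)=\bigsqcup_{h\in X}\{h\}\times\mathcal N_{\bar h},
\]
where $\mathcal N_{\bar h}\subseteq\Hom(V_k,V_1)$ is the set defined before Lemma~\ref{lmm_composite_nilpotent}.

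\textbf{Step 2: apply Lemma~\ref{lmm_composite_nilpotent} in each fibre.} Take $V=V_1$, $V'=V_k$ and the map $\bar h$. For each $h\in X$ the lemma gives a bijection
\[
\mathcal N_{\bar h}\times V_1\cong\ker\bar h\times\Hom(V_k,V_1).
\]
Multiply both sides by $\prod_{i=2}^kV_i$ and take the disjoint union over $h\in X$. This yields
\[
\mathcal N(\beta)\times\prod_{i=1}^kV_i\;\cong\;\bigsqcup_{h\in X}\{h\}\times\ker\bar h\times\prod_{i=2}^kV_i\times\Hom(V_k,V_1).
\]
By the definition of $Y$, the right-hand side is exactly $Y\times\Hom(V_k,V_1)$.

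\textbf{Step 3: trade $Y$ for $X\times Z$.} Lemma~\ref{lmm_X_Y_Z_bijection} gives $Y\cong X\times Z$. Therefore
\[
Y\times\Hom(V_k,V_1)\cong X\times\Hom(V_k,V_1)\times Z=\Rep(Q,\beta)\times Z,
\]
which is~\eqref{eqn_bijection_nilp_field}.

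\textbf{Main obstacle.} The only delicate point is Step 2. The bijection of Lemma~\ref{lmm_composite_nilpotent} depends on $\bar h$ through the complements and the Leinster bijection on $V_1/\ker\bar h$. I must check that these choices are made canonically for each $h$, which the standing convention of fixed ordered bases and fixed complements for every subspace guarantees. Given that, the fibrewise bijections glue into a global bijection with no further compatibility conditions, since the union over $h\in X$ is disjoint. The degenerate case $\beta_i=0$ is also harmless: then $\bar h=0$, and $Z$ is the whole product $\prod_{i=1}^kV_i$.
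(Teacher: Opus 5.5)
Your proposal is correct and follows the paper's own proof essentially step for step. Like the paper, you decompose $\mathcal N(\beta)$ as a disjoint union over $h\in X$ of the sets $\mathcal N_{h_{k-1}\cdots h_1}$, apply Lemma~\ref{lmm_composite_nilpotent} fibrewise to reach $Y\times\Hom(V_k,V_1)$, and then invoke Lemma~\ref{lmm_X_Y_Z_bijection} to replace $Y$ by $X\times Z$.
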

\begin{proof}
   A representation $(f_1,\ldots,f_{k})\in\Rep(Q,\beta)$ is nilpotent if and only if $f_{k}\cdots f_1\in\mathcal N(V_1)$, or equivalently, $f_k\in\mathcal N_{f_{k-1}\cdots f_1}$. Therefore, $\mathcal N(\beta)$ is isomorphic to the disjoint union,
\begin{equation}\label{eqn_Nilp_disjoint_union}
    \mathcal N(\beta)\cong \coprod_{h\in X}\mathcal N_{h_{k-1}\cdots h_1},\quad (f_1,\ldots,f_k)\mapsto ((f_1,\ldots,f_{k-1});f_k).
\end{equation}
For each $h\in X$, Lemma~\ref{lmm_composite_nilpotent} gives a bijection
\begin{equation}\label{eqn_nilp_kernel_bijection}
    \mathcal N_{h_{k-1}\cdots h_1}\times V_1\cong \ker\!(h_{k-1}\cdots h_1)\times\Hom(V_k,V_1).
\end{equation}
Thus, combining~\eqref{eqn_Nilp_disjoint_union} and~\eqref{eqn_nilp_kernel_bijection} yields a correspondence
\begin{equation}\label{eqn_N_V_1_bijection}
  \mathcal N(\beta)\times V_1\cong \coprod_{h\in X}\ker\!(h_{k-1}\cdots h_1)\times \Hom(V_k,V_1).
\end{equation}

Letting $Y_h = \ker\!(h_{k-1}\cdots h_1)\times\prod_{i=2}^kV_i$ for $h\in X$, 
we have 
$Y = \coprod_{h\in X} Y_h$, 
and 
$X\times\Hom(V_k,V_1)=\Rep(Q,\beta)$. Therefore,~\eqref{eqn_N_V_1_bijection} and Lemma~\ref{lmm_X_Y_Z_bijection} give
\begin{align*}
  \mathcal N(\beta)\times \prod_{i=1}^kV_i&\cong\left( \mathcal N(\beta)\times V_1\right)\times\prod_{i=2}^kV_i\\
  &\cong Y\times \Hom(V_k,V_1)\\
  &\cong X\times Z\times \Hom(V_k,V_1)\\
  &\cong\Rep(Q,\beta)\times Z.
\end{align*}
This proves the theorem.
\end{proof}

\subsection{Enumeration of nilpotent maps over finite fields}
\label{subsection_enumeration_nil_map_finite_field}
Let $\kk=\mathbb F_q$ be the finite field with $q\in\mathbb N$ elements, where $q$ is a prime power. Recall that the indices are taken modulo $k$.

\begin{theorem}
\label{thm_card_nilp_finite_field}
Let $Q$ be the cyclic quiver, 
and let $\beta$ be a dimension vector.
  We have 
    \begin{equation}
    \label{eqn_cardinality_formula}
        N(\beta) = q^{\sum_{i=1}^k \beta_i \beta_{i+1}-\sum_{i=1}^k \beta_i}
   \left(q^{\sum_{i=1}^k \beta_i}-\prod_{i=1}^k(q^{\beta_i}-1)\right).
    \end{equation}
\end{theorem}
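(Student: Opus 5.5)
The plan is to take cardinalities on both sides of the bijection in Theorem~\ref{thm_nilp_geometric_bijection} with $\kk=\F_q$. Everything involved is finite, so the bijection
\[
\mathcal N(\beta)\times \prod_{i=1}^k V_i\cong\Rep(Q,\beta)\times Z
\]
gives the identity $N(\beta)\cdot \prod_{i=1}^k |V_i| = |\Rep(Q,\beta)|\cdot |Z|$. It then remains to compute three cardinalities.

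First, $|V_i| = q^{\beta_i}$, so $\prod_{i=1}^k |V_i| = q^{\sum_i \beta_i}$.

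Second, each $\Hom(V_i,V_{i+1})$ is a $\beta_{i+1}\times\beta_i$ matrix space, so $|\Rep(Q,\beta)| = \prod_{i=1}^k q^{\beta_i\beta_{i+1}} = q^{\sum_i \beta_i\beta_{i+1}}$. Here the indices are taken modulo $k$, so the factor $\Hom(V_k,V_1)$ contributes $q^{\beta_k\beta_1}$.

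Third, I compute $|Z|$ by complementation rather than by inclusion--exclusion over the union description of $Z$. By definition $Z$ is the set of tuples with at least one zero coordinate. Its complement in $\prod_i V_i$ is therefore $\prod_{i=1}^k (V_i\setminus\{0\})$, which has cardinality $\prod_{i=1}^k (q^{\beta_i}-1)$. Hence
\[
|Z| = q^{\sum_i\beta_i} - \prod_{i=1}^k (q^{\beta_i}-1).
\]
This is valid even when some $\beta_i=0$. In that case $V_i=\{0\}$, so $Z$ is all of $\prod_i V_i$, and the corresponding factor $q^{0}-1$ vanishes, which is consistent.

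Finally, I solve for $N(\beta)$ by dividing by $q^{\sum_i\beta_i}\neq 0$. This gives exactly \eqref{eqn_cardinality_formula}. There is no real obstacle here. The only point requiring care is the count of $Z$, and in particular checking that the degenerate cases with some $\beta_i=0$ are handled correctly, which the complement argument does automatically.
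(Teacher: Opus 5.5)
Your proposal is correct and matches the paper's proof: take cardinalities in the bijection of Theorem~\ref{thm_nilp_geometric_bijection}, count $\Rep(Q,\beta)$ and $\prod_{i=1}^k V_i$ directly, and count $Z$ through its complement $\prod_{i=1}^k(V_i\setminus\{0\})$. Your check of the degenerate case $\beta_i=0$ is correct; it is a small addition the paper does not spell out.
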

    \begin{remark}
Equivalently, using the notation in Section~\ref{subsection_dimension_vectors} gives
\[ N(\beta) = q^{\beta\cdot\sigma\beta-\mathbf 1\cdot\beta}
   \left(q^{\mathbf 1\cdot\beta}-\prod_{i=1}^k(q^{\beta_i}-1)\right).\]
Expanding the product in~\eqref{eqn_cardinality_formula} also gives the alternative formula
\begin{equation}
\label{eqn_nilpot_k_tuple_cardinality}
\begin{split}
  N(\beta)  =  
 q^{\beta\cdot\sigma\beta -\mathbf 1\cdot\beta}
\left(\sum_{u=1}^{k} \sum_{1\leq i_1 < i_2 <\ldots < i_u \leq k} 
(-1)^{u-1} 
q^{\beta_1 + \ldots +\widehat{\beta}_{i_1} + \ldots + \widehat{\beta}_{i_u} + \ldots + \beta_k} \right),
\\ 
\end{split}
\end{equation}
where $\widehat{\beta}_{i_j}$ denotes removal of $\beta_{i_j}$ from the summation.
\end{remark}

We now prove Theorem~\ref{thm_card_nilp_finite_field}, which turns the bijection of Theorem~\ref{thm_nilp_geometric_bijection} into the enumeration formula in~\eqref{eqn_cardinality_formula}.

\begin{proof}
    Theorem~\ref{thm_nilp_geometric_bijection} gives
    \begin{equation}
         N(\beta) \cdot\left|\prod_{i=1}^k V_i\right|
    =\left|\Rep(Q,\beta)\right|\cdot|Z|.
    \end{equation}
    The products $\Rep(Q,\beta)= \prod_{i=1}^k\Hom(V_i,V_{i+1})$ and $\prod_{i=1}^kV_i$ have cardinality $q^{\sum_{i=1}^k \beta_i \beta_{i+1}}$ and $q^{\sum_{i=1}^k \beta_i}$, respectively.
    Also, $\prod_{i=1}^kV_i\setminus Z$ is the set of tuples $(v_1,\ldots,v_k)$ for which no $v_i=0$, which is $\prod_{i=1}^k\big(V_i\setminus\{0\}\big)$. Thus, $|Z|=q^{\sum_{i=1}^k \beta_i}-\prod_{i=1}^k(q^{\beta_i}-1)$,
    and the result follows.
\end{proof}

\subsection{Enumeration of nilpotent maps by rank}
\label{subsection_enumeration_nilpotent_maps_rank}
We continue to work over the finite field $\kk=\mathbb F_q$ and use the notation for $q$-binomial coefficients and ranks introduced in Section~\ref{subsection_rank_representations}.

We use the standard interpretation of $q$-binomial coefficients: for all $0\leq r\leq m$, there are $\qbinom{m}{r}{q}$ subspaces $U\subseteq \mathbb F_q^m$ of dimension $r$~\cite[Proposition 1.3.18]{Sta97}. Then for all $0\leq r\leq\min \{ m,n\}$, it follows as in~\cite[Lemma 4.7]{CILR25} that there are
\[\qbinom{m}{r}{q}\qbinom{n}{r}{q}\left|\GL_r(\mathbb F_q)\right|\]
maps $f\in\Hom(\mathbb F_q^m,\mathbb F_q^n)$ with $\rank \: f=r$. In particular, when $n\leq m$ there are $\qbinom{m}{n}{q}|\GL_n(\mathbb F_q)|$ surjections $f:\mathbb F_q^m\to\mathbb F_q^n$.

\begin{lemma}\label{prop_rank_subspace}
Let $U\subseteq\mathbb F_q^m$ and $U'\subseteq\mathbb F_q^n$ be subspaces with $a=\dim U\geq\dim U'=b$. Write $I(U,U')\subseteq\Hom(\mathbb F_q^m,\mathbb F_q^n)$ for the subset of
maps $f$ with $f(U)=U'$. Then
\[|I(U,U')|=q^{(m-a)n}\qbinom{a}{b}{q}|\GL_b(\mathbb F_q)|.\]
\end{lemma}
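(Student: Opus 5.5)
Fix a complement $U^{\c}$ with $\mathbb F_q^m=U\oplus U^{\c}$; such a complement is provided by the conventions of Section~\ref{subsection_nilp_rep}. Linear maps $f:\mathbb F_q^m\to\mathbb F_q^n$ then correspond bijectively to pairs $(f|_U,f|_{U^{\c}})\in\Hom(U,\mathbb F_q^n)\times\Hom(U^{\c},\mathbb F_q^n)$. The condition $f(U)=U'$ involves only the first coordinate, so
\[
I(U,U')\cong\{g\in\Hom(U,\mathbb F_q^n) : g(U)=U'\}\times\Hom(U^{\c},\mathbb F_q^n).
\]
The second factor has $q^{(m-a)n}$ elements because $\dim U^{\c}=m-a$. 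This accounts for the power of $q$ in the formula.

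For the first factor, we restrict the codomain. A map $g:U\to\mathbb F_q^n$ satisfies $g(U)=U'$ exactly when $g$ factors as a surjection $U\to U'$ followed by the inclusion $U'\hookrightarrow\mathbb F_q^n$, and this factorization is unique since the inclusion is injective. So the first factor is in bijection with the set of surjections $U\twoheadrightarrow U'$. After choosing bases, this is the set of surjections $\mathbb F_q^a\twoheadrightarrow\mathbb F_q^b$ with $b\le a$. By the count recalled just before the lemma (following \cite[Lemma 4.7]{CILR25}), there are $\qbinom{a}{b}{q}|\GL_b(\mathbb F_q)|$ such surjections.

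To justify that count directly: a surjection $U\to U'$ is determined by its kernel $K$, a subspace of $U$ of dimension $a-b$, together with an isomorphism $U/K\xrightarrow{\simeq}U'$. There are $\qbinom{a}{a-b}{q}=\qbinom{a}{b}{q}$ choices of $K$ and $|\GL_b(\mathbb F_q)|$ choices of the isomorphism. Multiplying the two factors gives $|I(U,U')|=q^{(m-a)n}\qbinom{a}{b}{q}|\GL_b(\mathbb F_q)|$.

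The only step that needs care is the factorization of $I(U,U')$ as a product. We must check that the surjectivity condition onto $U'$ really depends only on $f|_U$, and that the values of $f$ on $U^{\c}$ are completely unconstrained. Both follow immediately from the direct sum decomposition $\mathbb F_q^m=U\oplus U^{\c}$, so no step presents a real obstacle.
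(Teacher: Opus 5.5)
Your proof is correct and follows essentially the same route as the paper. The paper uses the restriction map $f\mapsto f|_U$, observes that it is a surjective linear map with kernel $\Hom(U^{\c},\mathbb F_q^n)$, and notes that $I(U,U')$ is the preimage of the set of surjections $U\to U'$, which is exactly your product decomposition; your direct count of those surjections, via the kernel and the induced isomorphism $U/K\to U'$, is a self-contained substitute for the paper's citation of \cite[Lemma 4.7]{CILR25}.
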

\begin{proof}
    Let $S(U,U')\subseteq\Hom(U,U')$ be the subset of surjective linear maps and view $\Hom(U,U')$ as a subset of $\Hom(U,\mathbb F_q^n)$. Fix a complementary subspace $U^{\c}$ with $U\oplus U^{\c} = \mathbb F_q^m$.
    
    Consider the restriction map
    \[\varphi:\Hom(\mathbb F_q^m,\mathbb F_q^n)\to\Hom(U,\mathbb F_q^n),\quad f\mapsto f|_U.\]
    Then $f(U)=U'$ if and only if $f|_U(U)=U'$, so
    $I(U,U')=\varphi^{-1}(S(U,U'))$. Also $\varphi$ is a surjective linear map and $\ker\varphi\cong\Hom(U^{\c}, \mathbb F_q^n)$. Therefore,
    \[|I(U,U')|=|\varphi^{-1}(S(U,U'))|=|\ker\varphi||S(U,U')|=q^{(m-a)n}\qbinom{a}{b}{q}|\GL_b(\mathbb F_q)|.\qedhere\]
\end{proof}

We return to $\Rep(Q,\beta)$ with $\beta\in\mathbb Z_{\geq 0}^k$ and corresponding $\mathbb F_q$-vector spaces $V_1,\ldots,V_k$.

\begin{theorem}\label{thm_N_gamma(beta)_cardinality}
    For each $\gamma\in\mathcal R(\beta)$,
     \[|\mathcal N_\gamma(\beta)|=q^{(\beta-\gamma)\cdot\sigma\beta+\gamma_k(\beta_1-1)}\qbinom{\beta}{\gamma}{q}\prod_{i=1}^{k-1}\left(\qbinom{\gamma_i}{\gamma_{i+1}}{q}|\GL_{\gamma_{i+1}}(\mathbb F_q)|\right).\]
\end{theorem}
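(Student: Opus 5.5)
The plan is to count $\mathcal N_\gamma(\beta)$ by building $f=(f_1,\ldots,f_k)$ one map at a time, keeping track of the nested images $U_1:=V_1$ and $U_{i+1}:=f_i(U_i)=\im(f_i\cdots f_1)\subseteq V_{i+1}$ for $1\leq i<k$. Saying $\rank\: f=\gamma$ means exactly that $\dim U_i=\gamma_i$ for every $i$. Nilpotence involves only the last map: $f\in\mathcal N(\beta)$ if and only if $f_k\in\mathcal N_h$, where $h:=f_{k-1}\cdots f_1$. So I will first count the tuples $(f_1,\ldots,f_{k-1})$ with the prescribed image dimensions. Then, for each such tuple, I will count the admissible $f_k$.

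For the first stage I sum over flags of subspaces. Fix $1\leq i<k$ and suppose $U_i$ is already determined. I choose $U_{i+1}\subseteq V_{i+1}$ of dimension $\gamma_{i+1}$, which can be done in $\qbinom{\beta_{i+1}}{\gamma_{i+1}}{q}$ ways. Lemma~\ref{prop_rank_subspace}, applied with $U=U_i$ and $U'=U_{i+1}$ (legitimate since $\gamma_i\geq\gamma_{i+1}$), then gives exactly
\[q^{(\beta_i-\gamma_i)\beta_{i+1}}\qbinom{\gamma_i}{\gamma_{i+1}}{q}|\GL_{\gamma_{i+1}}(\mathbb F_q)|\]
maps $f_i$ with $f_i(U_i)=U_{i+1}$. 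This number does not depend on which subspaces were chosen. Hence the number of $(f_1,\ldots,f_{k-1})$ with rank data $(\gamma_1,\ldots,\gamma_k)$ is the product of these factors over $i<k$, multiplied by $\prod_{i=2}^k\qbinom{\beta_i}{\gamma_i}{q}$. Because $\gamma_1=\beta_1$, the last product equals $\qbinom{\beta}{\gamma}{q}$.

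For the second stage, fix such a tuple and set $h=f_{k-1}\cdots f_1:V_1\to V_k$. Its rank is $\gamma_k$, so $\dim\ker h=\beta_1-\gamma_k$. Taking cardinalities in Lemma~\ref{lmm_composite_nilpotent} gives
\[|\mathcal N_h|\,q^{\beta_1}=q^{\beta_1-\gamma_k}q^{\beta_k\beta_1},\qquad\text{so}\qquad |\mathcal N_h|=q^{\beta_k\beta_1-\gamma_k}.\]
This depends only on $\gamma_k$, so it simply multiplies the count from the first stage.

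The remaining step, and the only place where care is needed, is checking that the exponents match. The total power of $q$ is
\[\sum_{i<k}(\beta_i-\gamma_i)\beta_{i+1}+\beta_k\beta_1-\gamma_k.\]
I rewrite $\beta_k\beta_1$ as $(\beta_k-\gamma_k)\beta_1+\gamma_k\beta_1$. The sum then becomes $(\beta-\gamma)\cdot\sigma\beta+\gamma_k(\beta_1-1)$, which is the claimed exponent.
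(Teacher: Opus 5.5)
Your proposal is correct and follows the paper's proof essentially step for step. Like the paper, you partition $\mathcal N_\gamma(\beta)$ by the flag of images $(V_1,\im f_1,\ldots,\im(f_{k-1}\cdots f_1))$, count each $f_i$ with Lemma~\ref{prop_rank_subspace}, count the admissible $f_k$ with Lemma~\ref{lmm_composite_nilpotent} to get $q^{\beta_k\beta_1-\gamma_k}$, and then check that the exponents agree.
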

\begin{proof}
Define the set
\[S_\gamma:=\{(U_1,\ldots,U_k) : \text{for each }1\leq i\leq k,\, U_i\subseteq V_i\text{ is a subspace with }\dim U_i=\gamma_i\}.\]
Note that $U_1=V_1$ for each $U\in S_\gamma$. Also, $|S_\gamma|=\prod_{i=1}^k\qbinom{\beta_i}{\gamma_i}{q}=\qbinom{\beta}{\gamma}{q}$.

A representation $f$ is in $\mathcal N_\gamma(\beta)$ if and only if $f_k\in\mathcal N_{f_{k-1}\cdots f_1}$ and
\[(V_1,\im f_1,\ldots,\im(f_{k-1}\cdots f_1))\in S_\gamma.\]
For each $U\in S_\gamma$, write $[U]\subseteq\mathcal N_\gamma(\beta)$ for the subset of $f$ with $\im(f_i\cdots f_1)=U_{i+1}$ for $1\leq i<k$. Then $f\in[U]$ if and only if $f_i(U_i)=U_{i+1}$ for each $1\leq i<k$ and $f_k\in\mathcal N_{f_{k-1}\cdots f_1}$. By Proposition~\ref{prop_rank_subspace}, there are
\begin{equation}\label{eqn_card_first_k-1_maps}
    \prod_{i=1}^{k-1}q^{(\beta_i-\gamma_i)\beta_{i+1}}\qbinom{\gamma_i}{\gamma_{i+1}}{q}|\GL_{\gamma_{i+1}}(\mathbb F_q)|=q^{\sum_{i=1}^{k-1}(\beta_i-\gamma_i)\beta_{i+1}}\prod_{i=1}^{k-1}\qbinom{\gamma_i}{\gamma_{i+1}}{q}|\GL_{\gamma_{i+1}}(\mathbb F_q)|
\end{equation}
such maps $f_1,\ldots,f_{k-1}$. Fixing $f_1,\ldots,f_{k-1}$,
Lemma~\ref{lmm_composite_nilpotent} gives a bijection
\begin{equation}\label{eqn_nilpotent_kernel}
    \mathcal N_{f_{k-1}\cdots f_1}\times V_1\cong \ker (f_{k-1}\cdots f_1)\times\Hom(V_k,V_1).
\end{equation}
By the rank-nullity theorem, $\dim(\ker(f_{k-1}\cdots f_1))=\beta_1-\gamma_k$. Thus,~\eqref{eqn_nilpotent_kernel} yields
\[|\mathcal N_{f_{k-1}\cdots f_1}|=\frac{q^{\beta_1-\gamma_k}q^{\beta_k\beta_1}}{q^{\beta_1}}=q^{\beta_k\beta_1-\gamma_k}.\]
Multiplying the number of choices for $f_1,\ldots,f_{k-1}$ given in~\eqref{eqn_card_first_k-1_maps} by the $q^{\beta_k\beta_1-\gamma_k}$ maps $f_k\in \mathcal N_{f_{k-1}\cdots f_1}$ gives
\begin{equation}\label{eqn_card_[U]}
    |[U]|=q^{\beta_k\beta_1-\gamma_k+\sum_{i=1}^{k-1}(\beta_i-\gamma_i)\beta_{i+1}}\prod_{i=1}^{k-1}\qbinom{\gamma_i}{\gamma_{i+1}}{q}|\GL_{\gamma_{i+1}}(\mathbb F_q)|.
\end{equation}

The exponent $\beta_k\beta_1-\gamma_k+\sum_{i=1}^{k-1}(\beta_i-\gamma_i)\beta_{i+1}$ is equal to $(\beta-\gamma)\cdot\sigma\beta+\gamma_k(\beta_1-1)$.
The sets $([U])_{U\in S_\gamma}$ partition $\mathcal N_\gamma(\beta)$ and each has the cardinality given in~\eqref{eqn_card_[U]}, so
\[|\mathcal N_\gamma(\beta)|=|S_\gamma||[U]|=q^{(\beta-\gamma)\cdot\sigma\beta+\gamma_k(\beta_1-1)}\qbinom{\beta}{\gamma}{q}\prod_{i=1}^{k-1}\left(\qbinom{\gamma_i}{\gamma_{i+1}}{q}|\GL_{\gamma_{i+1}}(\mathbb F_q)|\right).\qedhere\]
\end{proof}

\begin{corollary}\label{crl_N(beta)_cardinality_rank}
    We have
    \[
    N(\beta) = \sum_{\substack{\beta_1=\gamma_1\geq\gamma_2\geq \ldots\geq\gamma_k\geq 0\\\gamma\leq\beta}}q^{(\beta-\gamma)\cdot\sigma\beta+\gamma_k(\beta_1-1)}\qbinom{\beta}{\gamma}{q}\prod_{i=1}^{k-1}\left(\qbinom{\gamma_i}{\gamma_{i+1}}{q}|\GL_{\gamma_{i+1}}(\mathbb F_q)|\right).\]
\end{corollary}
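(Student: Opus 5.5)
The corollary follows by summing the formula of Theorem~\ref{thm_N_gamma(beta)_cardinality} over all admissible rank vectors. The plan is as follows.

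First, I note that the map $f\mapsto\rank\: f$ is defined on all of $\Rep(Q,\beta)$, and in particular on $\mathcal N(\beta)$. Hence the fibers $\mathcal N_\gamma(\beta)$, for $\gamma\in\mathcal R(\beta)$, partition $\mathcal N(\beta)$. This gives
\[N(\beta)=\sum_{\gamma\in\mathcal R(\beta)}|\mathcal N_\gamma(\beta)|.\]
Some of these fibers may be empty; that does not matter, since the count provided by Theorem~\ref{thm_N_gamma(beta)_cardinality} is valid for every $\gamma\in\mathcal R(\beta)$.

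Second, I rewrite the index set using the characterization stated in Section~\ref{subsection_rank_representations}: $\gamma\in\mathcal R(\beta)$ if and only if $\gamma_1=\beta_1$, $\gamma_{i+1}\le\gamma_i$ for $1\le i<k$, and $\gamma\le\beta$. This is exactly the summation range $\beta_1=\gamma_1\ge\gamma_2\ge\cdots\ge\gamma_k\ge0$ with $\gamma\le\beta$.

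If one wants to verify that characterization rather than cite it, the argument is short. For necessity, $\rank(f_i\cdots f_1)\le\rank(f_{i-1}\cdots f_1)$ and $\rank(f_i\cdots f_1)\le\dim V_{i+1}$. For sufficiency, given such a $\gamma$, choose $f_i$ to map a $\gamma_i$-dimensional image onto a $\gamma_{i+1}$-dimensional subspace of $V_{i+1}$, which is possible because $\gamma_{i+1}\le\min(\gamma_i,\beta_{i+1})$.

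Third, I substitute the expression for $|\mathcal N_\gamma(\beta)|$ from Theorem~\ref{thm_N_gamma(beta)_cardinality} term by term, which yields the stated sum.

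There is no real obstacle here. The only point needing care is that the index set matches $\mathcal R(\beta)$ exactly. Every $\gamma$ in the stated range lies in $\mathcal R(\beta)$, so the theorem applies to each summand, and every element of $\mathcal N(\beta)$ has a rank vector in that range, so nothing is missed.
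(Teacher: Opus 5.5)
Your proposal is correct and is essentially the paper's proof: partition $\mathcal N(\beta)$ into the fibers $\mathcal N_\gamma(\beta)$ for $\gamma\in\mathcal R(\beta)$, identify $\mathcal R(\beta)$ with the stated index range, and substitute the count from Theorem~\ref{thm_N_gamma(beta)_cardinality}. Your short check of the characterization of $\mathcal R(\beta)$, which the paper only asserts, is a sound addition.
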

\begin{proof}
    The sets $\mathcal N_\gamma(\beta)$ partition $\mathcal N(\beta)$ as $\gamma$ ranges over $\mathcal R(\beta)$, so
    \[N(\beta)=\sum_{\gamma\in\mathcal R(\beta)}|\mathcal N_\gamma(\beta)|.\]
    A dimension vector $\gamma$ is in $\mathcal R(\beta)$ precisely if $\gamma\leq\beta$ and $\beta_1=\gamma_1\geq\ldots\geq\gamma_k\geq 0$, so the result follows by Theorem~\ref{thm_N_gamma(beta)_cardinality}.
\end{proof}

\begin{example}
Suppose $k=3$, with $\beta=(\beta_1,\beta_2,\beta_3)\in\mathbb Z_{\geq 0}^3$. There is a bijection 
\begin{equation}
\begin{split}
V_1\times V_2\times V_3 \times \mcN(\beta) \simeq &\Hom(V_1,V_2)\times \Hom(V_2,V_3)\times \Hom(V_3,V_1)  \\ 
&\times ( (V_1 \times V_2\times\{0\})\cup (V_1\times\{0\}\times V_3)\cup (\{0\}\times V_2\times V_3) ),
\end{split}
\end{equation}
In the case when $\kk = \F_q$, 
we have 
\begin{equation}
\label{eqn_nilpotent_triple}
 N(\beta)  
= \sum_{r=0}^{\min\{\beta_1, \beta_2 \} }
\qbinom{\beta_1}{r}{q} 
\qbinom{\beta_2}{r}{q}
|\GL_r(\F_q)| 
q^{\beta_3(\beta_1 + \beta_2) - \beta_3 - r} (q^{\beta_3} + q^r -1),
\end{equation}
This simplifies as 
\begin{equation}
\label{eqn_nilp_triple_closed_form}
 N(\beta)  = q^{\beta_1 \beta_2 + \beta_2 \beta_3 + \beta_3 \beta_1 - (\beta_1 + \beta_2 + \beta_3)} (q^{\beta_1 + \beta_2} + q^{\beta_2 + \beta_3} + q^{\beta_3 + \beta_1} - q^{\beta_1} - q^{\beta_2} - q^{\beta_3} + 1). 
\end{equation}
The expression in parenthesis can also be written as $q^{\beta_1 + \beta_2 + \beta_3} - (q^{\beta_1} -1)(q^{\beta_2} - 1)(q^{\beta_3} - 1)$. It is the cardinality of
\[(V_1\times V_2\times\{0\})\cup(V_1\times\{0\}\times V_3)\cup(\{0\}\times V_2\times V_3)\subseteq V_1\times V_2\times V_3.\]
\end{example}

\begin{example}
Let $k=4$. 
There is a bijection 
\begin{equation}
\label{eqn_nil_quadruples_vs}
\begin{aligned}
\prod_{i=1}^{4} V_i \times \mcN(\beta) 
 &\simeq \Rep(Q,\beta) \times 
  \bigcup_{j=1}^{4} \Big(\prod_{i=1}^{j-1} V_{i} \times\{0\}\times\prod_{i=j+1}^4V_i\Big), \\
&=\Rep(Q,\beta) \times \{(v_1,v_2,v_3,v_4)\in\prod_{i=1}^4V_i : \text{some }v_i=0\}.
\end{aligned}
\end{equation}
Over $\F_q$, 
taking cardinalities in \eqref{eqn_nil_quadruples_vs} gives  
\begin{equation}
\label{card_nilp_quadruples}
\begin{split}
q^{\beta_1 + \beta_2 + \beta_3 + \beta_4}|&\mcN(\beta)| =  q^{\beta_1 \beta_2 + \beta_2 \beta_3 + \beta_3 \beta_4 + \beta_4 \beta_1} \Big(q^{\beta_1 + \beta_2 +\beta_3} + q^{\beta_2 + \beta_3 + \beta_4}  \\ 
& + q^{\beta_3 + \beta_4 + \beta_1} + q^{\beta_4 + \beta_1 + \beta_2} 
- (q^{\beta_1 + \beta_2} + q^{\beta_1 + \beta_3} + q^{\beta_1 + \beta_4} + q^{\beta_2 + \beta_3}  \\
& + q^{\beta_2 + \beta_4} + q^{\beta_3 + \beta_4})  
+ (q^{\beta_1} + q^{\beta_2} + q^{\beta_3} + q^{\beta_4}) - 1\Big). \\ 
\end{split}
\end{equation}
The large parenthesis on the right hand side of \eqref{card_nilp_quadruples} can be interpreted as all points minus those points whose all four coordinates are nonzero:
\[
q^{\beta_1 + \beta_2 + \beta_3 + \beta_4} - (q^{\beta_1}-1)(q^{\beta_2}-1)(q^{\beta_3}-1)(q^{\beta_4}-1).
\]
\end{example}

\subsection{Probability of nilpotent cyclic representations}
\label{subsection_prob_cyclic_quiver_field}
Let $\kk=\mathbb F_q$, where $q$ is a power of a prime.

\begin{proposition}
\label{prop_prob_nil_rep_fin_field}
The probability that a representation in $\Rep(Q,\beta)$ is nilpotent is
\begin{equation}
\label{eqn_prob_nil_cyclic_rep}
\Prob(\mcN(\beta)) = 
\frac{|Z|}{\prod_{i=1}^k| V_i| } 
= 1 - \prod_{i=1}^k(1 - q^{-\beta_i}).
\end{equation}
\end{proposition}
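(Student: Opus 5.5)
The plan is to read the probability off the cardinality identity in Theorem~\ref{thm_nilp_geometric_bijection}, or equivalently off Theorem~\ref{thm_card_nilp_finite_field}. Over $\kk=\mathbb F_q$ every set involved is finite. Taking cardinalities in the bijection \eqref{eqn_bijection_nilp_field} gives
\[
N(\beta)\cdot\prod_{i=1}^k|V_i| = |\Rep(Q,\beta)|\cdot |Z|.
\]
Since $\Prob(\mcN(\beta)) = N(\beta)/|\Rep(Q,\beta)|$, dividing both sides by $|\Rep(Q,\beta)|\prod_{i=1}^k|V_i|$ yields the first equality $\Prob(\mcN(\beta)) = |Z|/\prod_{i=1}^k|V_i|$. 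This division is legitimate because both $|\Rep(Q,\beta)|=q^{\beta\cdot\sigma\beta}$ and $\prod_i|V_i|=q^{\mathbf 1\cdot\beta}$ are nonzero.

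For the second equality I will compute $|Z|$ by complementation, exactly as in the proof of Theorem~\ref{thm_card_nilp_finite_field}. The complement $\prod_i V_i\setminus Z$ equals $\prod_i (V_i\setminus\{0\})$, which has cardinality $\prod_i(q^{\beta_i}-1)$. Hence
\[
\frac{|Z|}{\prod_i|V_i|} = 1-\prod_{i=1}^k\frac{q^{\beta_i}-1}{q^{\beta_i}} = 1-\prod_{i=1}^k(1-q^{-\beta_i}).
\]
Alternatively, one can divide \eqref{eqn_cardinality_formula} by $q^{\beta\cdot\sigma\beta}$ and factor $q^{\mathbf 1\cdot\beta}=\prod_i q^{\beta_i}$ out of the bracket; this gives the same expression.

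There is no real obstacle here. The only point to check is the degenerate case where some $\beta_i=0$. Then $V_i=\{0\}$, so $Z$ is everything and the probability is $1$. The formula agrees, since the factor $1-q^{0}=0$ kills the product. This matches the fact that every representation is nilpotent when some vertex carries the zero space, because every long enough cyclic composition passes through the zero space at that vertex.
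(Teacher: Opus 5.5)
Your proof is correct and is essentially the paper's argument: the paper divides the count \eqref{eqn_cardinality_formula} by $|\Rep(Q,\beta)|=q^{\beta\cdot\sigma\beta}$, and that count was itself obtained by taking cardinalities in Theorem~\ref{thm_nilp_geometric_bijection} and computing $|Z|$ by complementation, so your route is the same, except that you make the intermediate equality $\Prob(\mcN(\beta))=|Z|/\prod_{i=1}^k|V_i|$ explicit. Your check of the degenerate case where some $\beta_i=0$ is also correct.
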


\begin{proof}
Dividing the number $ N(\beta) $ of nilpotent cyclic representations in~\eqref{eqn_cardinality_formula} by the total number of cyclic representations, $|\Rep(Q,\beta)| = q^{\beta\cdot\sigma\beta}$, gives the result.
\end{proof}

\begin{corollary}
\label{cor_limit_nil_cyclic_rep}
We have 
\begin{equation}
\label{eqn_prob_limit_finite_field}
0\leq \Prob(\mcN(\beta)) \leq \sum_{i=1}^{k} q^{-\beta_i}.
\end{equation}
Therefore $\Prob(\mcN(\beta))\rightarrow 0$ as $\sum_{i=1}^{k} q^{-\beta_i}\rightarrow 0$.
\end{corollary}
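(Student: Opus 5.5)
We will start from the closed formula of Proposition~\ref{prop_prob_nil_rep_fin_field}, namely $\Prob(\mcN(\beta)) = 1-\prod_{i=1}^k(1-q^{-\beta_i})$. Since this is a probability, the lower bound $0\leq\Prob(\mcN(\beta))$ is immediate. Alternatively, it follows directly from the formula because each factor $1-q^{-\beta_i}$ lies in $[0,1)$, so the product lies in $[0,1]$.

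For the upper bound, we will prove the elementary inequality
\[
1-\prod_{i=1}^k(1-x_i)\leq\sum_{i=1}^k x_i\qquad\text{for } x_1,\ldots,x_k\in[0,1],
\]
by induction on $k$, and then apply it with $x_i=q^{-\beta_i}\in(0,1]$. The case $k=1$ is an equality. For the inductive step, write $P=\prod_{i=1}^{k-1}(1-x_i)\in[0,1]$. Then
\[
1-P(1-x_k)=(1-P)+Px_k\leq(1-P)+x_k,
\]
and the inductive hypothesis bounds $1-P$ by $\sum_{i<k}x_i$. This is the standard union-bound (Weierstrass product) inequality.

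A more conceptual alternative uses the same expression $|Z|/\prod_i|V_i|$. Since $Z=\bigcup_j\{v : v_j=0\}$, the union bound gives $|Z|\leq\sum_j|Z_j|$, where $Z_j:=\{v : v_j=0\}$. Each $|Z_j|=\prod_i|V_i|/q^{\beta_j}$, so dividing yields the bound directly. I would present this version, since it reuses the set $Z$ from Theorem~\ref{thm_nilp_geometric_bijection} and the equality in \eqref{eqn_prob_nil_cyclic_rep}.

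The limit statement then follows by squeezing: $\Prob(\mcN(\beta))$ is trapped between $0$ and a quantity tending to $0$. No step is a real obstacle. The only care needed is to note that $q^{-\beta_i}\leq 1$ (including $\beta_i=0$, where the term equals $1$ and the bound is trivial), so that the union bound or induction applies without sign issues.
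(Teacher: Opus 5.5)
Your proposal is correct and is essentially the paper's argument: the paper invokes the first Bonferroni inequality (the union bound) to get $1-\prod_{i=1}^k(1-q^{-\beta_i})\leq\sum_{i=1}^k q^{-\beta_i}$ from $0\leq q^{-\beta_i}\leq 1$, which is exactly what your induction proves and what your bound $|Z|\leq\sum_j|Z_j|$ makes explicit. Your squeeze argument for the limit is likewise the same as the paper's.
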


\begin{proof}
Since $q\geq 2$ and $\beta_i\geq 0$ for each $1\leq i\leq k$,
we have $0\leq q^{-\beta_i}\leq 1$ for each $i$, so the Bonferroni inequality~\cite{Gri20,Sta97,GKP94,Bru10} gives
$1 - \prod_{i=1}^k(1-q^{-\beta_i})\leq \sum_{i=1}^{k}q^{-\beta_i}$.
\end{proof}

\section{Nilpotent semirepresentations of cyclic quivers over the Boolean semiring}
\label{section_nilp_rep_Boolean}

In this section, we generalize~\cite[Section 4]{CIKLR25} to cyclic equioriented quivers. We use the notation for quiver representations given in Section~\ref{subsection_rep_quivers} and the graph constructions established in Section~\ref{subsection_directed_walks_graphs}.

\subsection{Recursion for nilpotent semirepresentations}
\label{subsection_nilp_semirepresentations}

For each dimension vector $\beta\in\mathbb Z_{\geq 0}^k$, take free $\mathbb B$-semimodules $M_1,\ldots,M_k$ with $\rank \: M_i=\beta_i$. Fix an ordered basis for each semimodule $M_i$.

Define the directed graph $G_\beta$ with vertex and edge sets given by
\[V(G_\beta)=\coprod_{i=1}^k[\beta_i],\quad E(G_\beta)=\coprod_{i=1}^k\left([\beta_i]\times[\beta_{i+1}]\right).\]
Vertices in $[\beta_i]$ have an edge directed to each of the vertices in $[\beta_{i+1}]$. Note that 
\[|V(G_\beta)|=\sum_{i=1}^k\beta_i=\mathbf 1\cdot\beta,\quad|E(G_\beta)|=\sum_{i=1}^k\beta_i\beta_{i+1}=\beta\cdot\sigma\beta.\]

For each $f\in\Rep_\mathbb B(Q,\beta)$ and $1\leq i\leq k$, let $F^i$ be the matrix of $f_i:M_i\to M_{i+1}$ with respect to the ordered bases of $M_i$, $M_{i+1}$.
Then, let $G(f)$ be the spanning subgraph of $G_\beta$ that contains the edge $(a,b)\in[\beta_i]\times[\beta_{i+1}]$ precisely when $F^i_{b,a}=1$.

The assignment $f\mapsto G(f)$ bijects $\Rep_\mathbb B(Q,\beta)$ to the set of spanning subgraphs of $G_\beta$; the representations $f\in\Rep_\mathbb B(Q,\beta)$ are $k$-tuples $(f_1,\ldots,f_k)$ with $f_i\in \Hom_{\mathbb B}(M_i,M_{i+1})$, and each set $\Hom_{\mathbb B}(M_i,M_{i+1})$ identifies with the $\beta_{i+1}\times\beta_i$ matrices over $\mathbb B$, which
correspond to all possible collections of edges from $[\beta_i]$ to $[\beta_{i+1}]$.
We claim that this map restricts to a bijection from $\mathcal N_\mathbb B(\beta)$ to $\DAG(G_\beta)$.
For each $f\in\Rep_\B(Q,\beta)$, $1\leq i\leq k$, $\ell\geq 1$, and $a\in[\beta_i]$, $b\in[\beta_{i+\ell}]$,
\begin{equation}\label{eqn_iterated_composition_computation}
    (F^{i+\ell-1}\cdots F^i)_{b,a} = \sum_{u_{\ell-1}=1}^{\beta_{i+\ell-1}}
    \ldots \sum_{u_{1}=1}^{\beta_{i+1}}F^i_{u_1,a}F^{i+1}_{u_2,u_1}\cdots F^{i+\ell-1}_{b,u_{\ell-1}}.
\end{equation}
The matrix entry~\eqref{eqn_iterated_composition_computation} is $0$ if and only if there is no directed walk
in $G(f)$ of length $\ell$ from $a$ to $b$. Therefore $f$ is nilpotent if and only if $G(f)$ is acyclic, since
all sufficiently long compositions $f_{i+\ell-1}\circ\cdots\circ f_i$ are zero precisely if walks in $G(f)$ are bounded in length.
By this correspondence, we have
\begin{equation}\label{eqn_DAG_nilpotent_correspondence}
    N_{\B}(\beta)=|\DAG(G_\beta)|.
\end{equation}

For each $U\subseteq V(G_\beta)$, define subsets $U_i:=U\cap[\beta_i]$ for $1\leq i\leq k$ and let $\mathbf U\in\mathbb Z_{\geq 0}^k$ be the dimension vector $\mathbf U:=(|U_1|,\ldots,|U_k|)$. The induced graph $G_\beta[U]$ is isomorphic to the graph $G_{\mathbf U}$ defined by the dimension vector $\mathbf U$, so~\eqref{eqn_DAG_nilpotent_correspondence} gives
\[|\DAG(G_\beta[U])|=|\DAG(G_\mathbf U)|=N_{\B}(\mathbf U).\]
Also, write $U^{\c}$ for $V(G_\beta)\setminus U$ and let $\mathcal S(U):=\{D\in\DAG(G_\beta) : U\subseteq\Src(D)\}$.

\begin{lemma}\label{lmm_counting_S(U)}
    For each nonempty vertex subset $U\subseteq V(G_\beta)$, 
    \[|\mathcal S(U)|=2^{\mathbf U\cdot\sigma(\beta-\mathbf U)} N_{\B}(\beta-\mathbf U).\]
\end{lemma}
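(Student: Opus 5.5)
The plan is to build an explicit bijection
\[\mathcal S(U)\;\cong\;\{\text{edge sets } E\subseteq E(G_\beta)\cap (U\times U^{\c})\}\times \DAG(G_\beta[U^{\c}]).\]
The right side has cardinality $2^{|E(G_\beta)\cap(U\times U^{\c})|}\cdot N_{\B}(\beta-\mathbf U)$. This uses the identification $|\DAG(G_\beta[U^{\c}])|=N_{\B}(\mathbf{U^{\c}})$ established just before the lemma, together with the observation that $\mathbf{U^{\c}}=\beta-\mathbf U$.

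\textbf{Step 1: decompose a DAG in $\mathcal S(U)$.} Take $D\in\mathcal S(U)$. Every vertex of $U$ is a source, so no edge of $D$ has its target in $U$. Hence every edge of $D$ lies either in $U\times U^{\c}$ or in $U^{\c}\times U^{\c}$. Define the map $D\mapsto (E(D)\cap(U\times U^{\c}),\,D[U^{\c}])$. The second component is a spanning subgraph of $G_\beta[U^{\c}]$, and it is acyclic because it is a subgraph of a DAG.

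\textbf{Step 2: the inverse map.} Given a pair $(E,D')$, form the spanning subgraph of $G_\beta$ with edge set $E\cup E(D')$. No edge targets $U$, so every vertex of $U$ is a source. Any closed walk must avoid $U$, since it cannot enter a vertex of $U$. It would therefore be a closed walk in $D'$, which is impossible. So the new graph is a DAG lying in $\mathcal S(U)$. The two constructions are clearly mutually inverse, because Step 1 showed that the edges of $D$ split as $E\sqcup E(D')$.

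\textbf{Step 3: count the edges from $U$ to $U^{\c}$.} The edges of $G_\beta$ from $U$ to $U^{\c}$ are exactly the pairs in $U_i\times (U^{\c})_{i+1}$, summed over $i$. Their number is
\[\sum_i |U_i|\,(\beta_{i+1}-|U_{i+1}|)=\mathbf U\cdot\sigma(\beta-\mathbf U).\]
Combining this with the bijection gives the stated formula.

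The only point needing care is the acyclicity argument in Step 2, namely that no cycle can pass through $U$. The hypothesis that $U$ is nonempty is not actually used; it only matters later in the inclusion–exclusion.
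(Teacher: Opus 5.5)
Your proposal is correct and follows essentially the same route as the paper. The paper also uses the bijection $\DAG(G_\beta[U^{\c}])\times\mathcal A(U)\cong\mathcal S(U)$, $(D,D')\mapsto D\cup D'$, where $\mathcal A(U)$ is the set of spanning subgraphs whose edges all go from $U$ to $U^{\c}$, and then counts $|\mathcal A(U)|=2^{\mathbf U\cdot\sigma(\beta-\mathbf U)}$ exactly as in your Step 3; your remark that nonemptiness of $U$ is not needed is also correct.
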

\begin{proof}
Let $\mathcal A(U)$ denote the set of spanning subgraphs of $G_\beta$ whose edges have source in $U$ and target in $U^{\c}$.
There is a bijection
    \begin{equation}\label{eqn_S(U)_bijection}
        \DAG(G_\beta[U^{\c}])\times\mathcal A(U)\xrightarrow\simeq\mathcal S(U),\quad (D,D')\mapsto D\cup D'.
    \end{equation}

    Fix any pair $(D,D')$. All edges in both graphs $D$, $D'$ have target in $U^{\c}$, so $U\subseteq\Src(D\cup D')$. Also, $D\cup D'$ is acyclic since any closed walk would have to lie entirely in $U^{\c}$; such a walk would also be a walk of the acyclic graph $D$, which is impossible.
    Therefore $D\cup D'\in\mathcal S(U)$.

    The inverse map is given by taking $D\in\mathcal S(U)$ to the pair $(D[U^{\c}],D')$, where $D'$ is the spanning subgraph of $D$ with the edge set
    \[
    E(D'):=\{e\in E(D) : e\text{ has source in }U \}.
    \]
    The induced graph $D[U^{\c}]$ is acyclic as a subgraph of the acyclic graph $D$. Next, $D'\in\mathcal A(U)$ since
    each edge $e\in E(D')$ has source in $U$ and belongs to $E(D)$, so has target in $U^{\c}$.

 The bijection implies $|\mathcal S(U)|=|\mathcal A(U)|\cdot |\DAG(G_\beta[U^{\c}])|$.
 For each graph $D\in\mathcal A(U)$ and each $1\leq i\leq k$, $D$ has $\beta_{i+1}-|U_{i+1}|$ possible edges directed from each vertex $v\in U_i$ to the vertices in $[\beta_{i+1}]\setminus U_{i+1}$. Therefore,
    \[|\mathcal A(U)|=2^{\sum_{i=1}^k |U_i|(\beta_{i+1}-|U_{i+1}|)}=2^{\mathbf U\cdot\sigma(\beta-\mathbf U)}.\]
The result follows since $N_{\B}(\beta-\mathbf U)=|\DAG(G_\beta[U^{\c}])|$.
\end{proof}

\begin{theorem}
\label{thm_boolean_nilp_cardinality_recursion}
    For each nonzero dimension vector $\beta\in\mathbb Z_{\geq 0}^k$, the number of nilpotent semirepresentations of the cyclic quiver over $\B$ is
    \[
    N_{\B}(\beta) 
    = \sum_{\gamma<\beta}(-1)^{\mathbf 1\cdot(\beta-\gamma)-1}\binom{\beta}{\gamma}2^{(\beta-\gamma)\cdot\sigma\gamma}N_{\B}(\gamma).\]
\end{theorem}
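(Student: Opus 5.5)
The plan is to run inclusion–exclusion over source sets, using the correspondence~\eqref{eqn_DAG_nilpotent_correspondence} $N_{\B}(\beta)=|\DAG(G_\beta)|$ together with Lemma~\ref{lmm_counting_S(U)}. The key observation is that, since $\beta\neq 0$, the vertex set $V(G_\beta)$ is nonempty. Every finite nonempty DAG has at least one source: follow edges backwards, and finiteness plus acyclicity forces the process to stop at a vertex with no incoming edge. Hence
\[
\DAG(G_\beta)=\bigcup_{v\in V(G_\beta)}\mathcal S(\{v\}),
\]
and for a nonempty $U\subseteq V(G_\beta)$ we have $\bigcap_{v\in U}\mathcal S(\{v\})=\mathcal S(U)$ directly from the definition $\mathcal S(U)=\{D : U\subseteq\Src(D)\}$.

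Inclusion–exclusion then gives
\[
N_{\B}(\beta)=\sum_{\emptyset\neq U\subseteq V(G_\beta)}(-1)^{|U|-1}|\mathcal S(U)|
=\sum_{\emptyset\neq U}(-1)^{|U|-1}2^{\mathbf U\cdot\sigma(\beta-\mathbf U)}N_{\B}(\beta-\mathbf U),
\]
where the second equality is Lemma~\ref{lmm_counting_S(U)}. The summand depends only on the dimension vector $\mathbf U$. For a fixed $\delta$ with $0<\delta\leq\beta$, the number of subsets $U$ with $\mathbf U=\delta$ is $\prod_i\binom{\beta_i}{\delta_i}=\binom{\beta}{\delta}$, because we choose $\delta_i$ vertices independently from each $[\beta_i]$. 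Also $|U|=\mathbf 1\cdot\delta$.

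Next, reindex by $\gamma:=\beta-\delta$. As $\delta$ ranges over $0<\delta\leq\beta$, the vector $\gamma$ ranges exactly over $0\leq\gamma<\beta$, i.e.\ over all $\gamma<\beta$ in the componentwise order. We have $\binom{\beta}{\beta-\gamma}=\binom{\beta}{\gamma}$ componentwise, $|U|-1=\mathbf 1\cdot(\beta-\gamma)-1$, and the exponent becomes $\mathbf U\cdot\sigma(\beta-\mathbf U)=(\beta-\gamma)\cdot\sigma\gamma$. Substituting these gives exactly the stated formula.

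The only delicate point is the existence of a source in every nonempty DAG, which is what justifies the union decomposition; this is also where the hypothesis $\beta\neq0$ enters. The term $\gamma=0$ needs a quick check: it involves $N_{\B}(0)=1$, the count for the empty graph, which is consistent with Lemma~\ref{lmm_counting_S(U)} applied with $U=V(G_\beta)$. Everything else is bookkeeping.
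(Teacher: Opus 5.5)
Your proposal is correct and follows essentially the same route as the paper: inclusion--exclusion over the sets $\mathcal S(\{v\})$, justified by the existence of a source in every nonempty DAG, then Lemma~\ref{lmm_counting_S(U)} for $|\mathcal S(U)|$, then grouping subsets by dimension vector. The only cosmetic difference is that you group by $\mathbf U$ first and then substitute $\gamma=\beta-\mathbf U$, whereas the paper first reindexes by $U\mapsto U^{\c}$ and then groups.
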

\begin{proof}
    Each directed acyclic subgraph $D\in\DAG(G_\beta)$ has a source vertex $v\in V(G_\beta)$. Thus, the principle of inclusion-exclusion gives
    \[
    N_{\B}(\beta)=\sum_{\emptyset\neq U\subseteq V(G_\beta)}(-1)^{|U|-1}\left|\bigcap_{v\in U}\mathcal S(\{v\})\right|=\sum_{\emptyset\neq U\subseteq V(G_\beta)}(-1)^{|U|-1}\left|\mathcal S(U)\right|.\]
    For each $U\subseteq V(G_\beta)$, $|U^\c|=|V(G_\beta)\setminus U|=\sum_{i=1}^k|[\beta_i]\setminus U_i|=\mathbf 1\cdot(\beta-\mathbf U)$.
    Thus, reindexing the summation by $U\mapsto U^\c$ and then applying Lemma~\ref{lmm_counting_S(U)} for the cardinality of $\mathcal S(U^{\c})$ gives
    \begin{equation}\label{eqn_recursive_DAG_cardinality_subset_sum}
        \begin{aligned}
        N_{\B}(\beta) &= \sum_{U\subsetneq V(G_\beta)}(-1)^{|U^{\c}|-1}|\mathcal S(U^{\c})|\\
        &=\sum_{U\subsetneq V(G_\beta)}(-1)^{\mathbf 1\cdot(\beta-\mathbf U)-1}2^{(\beta-\mathbf U)\cdot\sigma\mathbf U}N_{\B}(\mathbf U).
    \end{aligned}
    \end{equation}
For each dimension vector $\gamma<\beta$, there are $\binom{\beta}{\gamma}=\prod_{i=1}^k\binom{\beta_i}{\gamma_i}$ subsets $U\subsetneq V(G_\beta)$ such that $\mathbf U=\gamma$; for each $1\leq i\leq k$, there are $\binom{\beta_i}{\gamma_i}$ subsets $U_i\subseteq[\beta_i]$ with $|U_i|=\gamma_i$.
Subsets with the same dimension vector contribute the same summand, so reindexing~\eqref{eqn_recursive_DAG_cardinality_subset_sum} by dimension vectors yields the result,
\[N_{\B}(\beta) =\sum_{\gamma<\beta}\sum_{\mathbf U=\gamma}(-1)^{\mathbf 1\cdot(\beta-\mathbf U)-1}2^{(\beta-\mathbf U)\cdot\sigma\mathbf U}N_{\B}(\mathbf U)=\sum_{\gamma<\beta}(-1)^{\mathbf 1\cdot(\beta-\gamma)-1}\binom{\beta}{\gamma}2^{(\beta-\gamma)\cdot\sigma\gamma}N_{\B}(\gamma).\qedhere\]
\end{proof}

\begin{corollary}
    If 
    $\beta = (1,1,\ldots, 1)$, then 
    $N_{\B}(\beta)=2^k-1$.
\end{corollary}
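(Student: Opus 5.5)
The quickest route is to count directly with the correspondence~\eqref{eqn_DAG_nilpotent_correspondence}, so that $N_{\B}(\mathbf 1)=|\DAG(G_{\mathbf 1})|$. When $\beta=\mathbf 1$, the graph $G_{\mathbf 1}$ has exactly one vertex in each $[\beta_i]=[1]$ and exactly one edge from the vertex at position $i$ to the vertex at position $i+1$, for $1\leq i\leq k$. Since $k\geq 2$, these edges are distinct and there are no loops, so $G_{\mathbf 1}$ is a directed $k$-cycle. A spanning subgraph is determined by a subset of its $k$ edges, which gives $2^k$ spanning subgraphs in total.

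Next I will show that a spanning subgraph of a directed cycle is acyclic unless it contains every edge. Every vertex has out-degree at most one, and its only out-neighbour is the next vertex on the cycle. A closed walk starting at a vertex therefore has to go through every successive edge around the cycle before it can come back. So a closed walk exists exactly when all $k$ edges are present. Removing the full cycle leaves $2^k-1$ acyclic spanning subgraphs, and hence $N_{\B}(\mathbf 1)=2^k-1$.

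As a check against Theorem~\ref{thm_boolean_nilp_cardinality_recursion}, I could instead run an induction on the number of ones using the recursion. This would need $N_{\B}(\gamma)$ for $0/1$ vectors $\gamma$, which all equal $2^{\gamma\cdot\sigma\gamma}$ when $\gamma\neq\mathbf 1$, because the induced subgraph is then a disjoint union of paths and so is acyclic. Carrying this out requires a careful sign computation, so I would rely on the direct graph argument above. The only point that needs care there is that a closed walk must traverse the whole cycle, and this follows from each vertex having a unique possible successor.

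The same conclusion also follows over $\B$ without graphs: $f$ is nilpotent exactly when the $1\times 1$ composite $f_k\cdots f_1$ is $0$. That composite is the product of the $k$ entries, and the product is $0$ unless every entry is $1$, which again gives $2^k-1$.
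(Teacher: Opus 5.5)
Your proof is correct and is essentially the paper's argument. The paper argues directly that $f$ is nilpotent if and only if some $f_i=0$ (your final paragraph), so only $f=(1,\ldots,1)$ fails; it then remarks that this unique non-nilpotent semirepresentation is the directed $k$-cycle, which is your main route through $N_{\mathbb B}(\beta)=|\DAG(G_\beta)|$, subtracting the single full cycle from the $2^k$ spanning subgraphs.
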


\begin{proof}
A semirepresentation $f\in \Rep_{\B}(Q,\beta)$ is nilpotent if and only if at least one of the $f_i=0$. This means all semirepresentations are nilpotent except one, $f=(1,1,\ldots, 1)$.
\end{proof}

The dimension vector $\beta = (1,1,\ldots, 1)$ gives the simplest possible Boolean case since there is only one edge between each consecutive pair of vertices. So the only non-nilpotent semirepresentation is the directed $k$-cycle.

\subsection{Probability of nilpotent cyclic tuples}
\label{subsection_prob_Boolean}
We continue to work over $\B$.
For each dimension vector $\beta\in\mathbb Z_{\geq 0}^k$, let $m(\beta):=\min_{1\leq i\leq k}\beta_i\beta_{i+1}$.
If any $\beta_i=0$, then all semirepresentations are trivially nilpotent. Thus suppose $\beta\in\mathbb Z_{\geq 1}^k$, which implies $m(\beta)>0$.

Let $C_\beta$ be the set of $k$-cycles in $G_\beta$, which are the closed walks
\[(v_1,v_2),(v_2,v_3),\ldots,(v_k,v_1),\quad v_i\in[\beta_i].\]

\begin{lemma}\label{lmm_fiber_bound}
    Every edge $(u,v)\in E(G_\beta)$ is contained in at most $\frac{\prod_{i=1}^k\beta_i}{m(\beta)}$ cycles in $C_\beta$.
\end{lemma}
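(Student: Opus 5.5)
A $k$-cycle in $C_\beta$ is determined by a choice of vertices $v_i\in[\beta_i]$ for $1\leq i\leq k$, so $|C_\beta|=\prod_{i=1}^k\beta_i$. The plan is to count exactly the cycles through a fixed edge and then compare that count with the stated bound.

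Fix an edge $(u,v)\in E(G_\beta)$. By the definition of $E(G_\beta)$ there is a unique index $j$ with $(u,v)\in[\beta_j]\times[\beta_{j+1}]$. A cycle $(v_1,v_2),\ldots,(v_k,v_1)$ in $C_\beta$ traverses exactly one edge from $[\beta_j]$ to $[\beta_{j+1}]$, namely $(v_j,v_{j+1})$. Since the parts $[\beta_i]$ are disjoint, the cycle contains $(u,v)$ if and only if $v_j=u$ and $v_{j+1}=v$. Here $k\geq 2$ ensures that $j$ and $j+1$ are distinct indices mod $k$. The remaining coordinates $v_i$, for $i\neq j,j+1$, are unconstrained. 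Hence the number of cycles through $(u,v)$ is exactly
\[\prod_{i\neq j,j+1}\beta_i=\frac{\prod_{i=1}^k\beta_i}{\beta_j\beta_{j+1}}.\]
This division is legitimate because we assume $\beta\in\mathbb Z_{\geq 1}^k$.

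Finally, $\beta_j\beta_{j+1}\geq m(\beta)>0$ by the definition of $m(\beta)$. Therefore the count is at most $\prod_{i=1}^k\beta_i/m(\beta)$, which is the claimed bound.

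There is no real obstacle in this argument. The only point needing care is the case $k=2$. There $j+1\equiv j-1$, but $(v_1,v_2)$ and $(v_2,v_1)$ lie in different edge classes $[\beta_1]\times[\beta_2]$ and $[\beta_2]\times[\beta_1]$, so the identification of the unique relevant edge of the cycle still holds. The product over $i\neq j,j+1$ is then empty and equal to $1$.
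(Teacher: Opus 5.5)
Your proposal is correct and follows essentially the same route as the paper: fix $j$ with $(u,v)\in[\beta_j]\times[\beta_{j+1}]$, note that any cycle through it must have $v_j=u$ and $v_{j+1}=v$, count $\prod_{i\neq j,j+1}\beta_i$ choices for the remaining vertices, and bound this by $\prod_{i=1}^k\beta_i/m(\beta)$ using $\beta_j\beta_{j+1}\geq m(\beta)$. Your extra remarks on the $k=2$ case and on $\beta\in\mathbb Z_{\geq 1}^k$ justifying the division are sound clarifications that the paper leaves implicit.
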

\begin{proof}
Let $u\in[\beta_j]$, $v\in[\beta_{j+1}]$. Then $(u,v)$ is contained only in $k$-cycles of the form
\[(v_1,v_2),(v_2,v_3),\ldots,(v_{j-1},u),(u,v),(v,v_{j+2}),(v_{j+2},v_{j+3}),\ldots,(v_k,v_1).\]
Varying the vertices $v_1,\ldots,v_{j-1},v_{j+2},\ldots,v_k$ with each $v_i\in[\beta_i]$ gives $\prod_{i\neq j,j+1}\beta_i\leq\frac{\prod_{i=1}^k\beta_i}{m(\beta)}$ possible $k$-cycles that contain $(u,v)$.
\end{proof}

\begin{lemma}
\label{lemma_nil_rep_cyclic_Boolean}
For all dimension vectors $\beta\in\mathbb Z_{\geq 1}^k$, the cardinality $N_{\B}(\beta)$ of nilpotent semirepresentations over $\B$ is bounded by 
\begin{equation}
\label{eqn_card_est_nil_rep_cyclic_boolean}
    2^{\beta\cdot\sigma\beta-m(\beta)}
    \leq
    N_{\B}(\beta)
    \leq \min\left((\mathbf 1\cdot\beta)!2^{\beta\cdot\sigma\beta-m(\beta)},(1-2^{-k})^{m(\beta)/k}2^{\beta\cdot\sigma\beta}\right).
\end{equation}
Therefore, the probability that a semirepresentation in $\Rep_{\B}(Q,\beta)$ is nilpotent is bounded by
\begin{equation}
\label{eqn_prob_nil_rep_cyclic_Boolean}
2^{-m(\beta)}\leq\Prob(\mcN_{\B}(\beta))\leq \min\left((\mathbf 1\cdot\beta)!2^{-m(\beta)},(1-2^{-k})^{m(\beta)/k}\right).
\end{equation}
\end{lemma}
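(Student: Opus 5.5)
Throughout we work with the identification $N_{\B}(\beta)=|\DAG(G_\beta)|$ from~\eqref{eqn_DAG_nilpotent_correspondence}, and we divide by $|\Rep_\B(Q,\beta)|=2^{\beta\cdot\sigma\beta}$ at the end to pass from~\eqref{eqn_card_est_nil_rep_cyclic_boolean} to~\eqref{eqn_prob_nil_rep_cyclic_Boolean}. The proof splits into three parts: the lower bound, the factorial upper bound, and the cycle-packing upper bound.

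\emph{Lower bound.} Choose an index $j$ with $\beta_j\beta_{j+1}=m(\beta)$. Every spanning subgraph of $G_\beta$ with no edges from $[\beta_j]$ to $[\beta_{j+1}]$ is acyclic. The reason is that any closed walk in $G_\beta$ advances the layer index by one at each step, so it must pass through every layer $[\beta_i]\to[\beta_{i+1}]$, and in particular through the missing layer $j$. These subgraphs are determined by arbitrary subsets of the remaining $\beta\cdot\sigma\beta-m(\beta)$ edges, which gives $2^{\beta\cdot\sigma\beta-m(\beta)}$ DAGs.

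\emph{First upper bound.} Every DAG $D$ admits a topological order, i.e.\ a linear order on $V(G_\beta)$ such that every edge goes forward. Hence $\DAG(G_\beta)=\bigcup_{\pi}\mathcal D_\pi$, where $\pi$ ranges over the $(\mathbf 1\cdot\beta)!$ linear orders and $\mathcal D_\pi$ is the set of spanning subgraphs all of whose edges are forward with respect to $\pi$. We claim that for each $\pi$, at least $m(\beta)$ edges of $G_\beta$ are backward, so that $|\mathcal D_\pi|\le 2^{\beta\cdot\sigma\beta-m(\beta)}$.

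To prove the claim, observe that each of the $\prod_i\beta_i$ cycles in $C_\beta$ must contain at least one $\pi$-backward edge. By Lemma~\ref{lmm_fiber_bound}, each edge lies in at most $\prod_i\beta_i/m(\beta)$ cycles. Counting incidences between cycles and backward edges, the number of backward edges is therefore at least $\prod_i\beta_i\big/\big(\prod_i\beta_i/m(\beta)\big)=m(\beta)$. A union bound over the orders $\pi$ then gives the factor $(\mathbf 1\cdot\beta)!$.

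\emph{Second upper bound.} This is the step I expect to be the main obstacle. The approach is to find many \emph{edge-disjoint} $k$-cycles in $C_\beta$. Once we have disjoint cycles $c_1,\dots,c_s$, note that a uniformly random spanning subgraph contains each $c_t$ independently with probability $2^{-k}$, since the cycles use disjoint edge sets. Acyclicity requires that none of the $c_t$ is present, so
\[
\Prob(\mcN_\B(\beta))\le(1-2^{-k})^{s}.
\]

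It therefore suffices to produce $s\ge m(\beta)/k$ edge-disjoint $k$-cycles. Two approaches are available.

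\emph{Greedy.} Picking a cycle destroys at most $k\prod_i\beta_i/m(\beta)$ other cycles, by Lemma~\ref{lmm_fiber_bound} applied to each of its $k$ edges. Repeating until no cycles remain yields at least $\prod_i\beta_i\big/\big(k\prod_i\beta_i/m(\beta)\big)=m(\beta)/k$ disjoint cycles.

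\emph{Explicit.} Alternatively, set $M=\max_i\beta_i$ and take the cycles $(t\bmod \beta_1,\dots,t\bmod\beta_k)$ for $t=1,\dots,s$. Two such cycles share an edge in layer $i$ only if $t\equiv t'$ modulo both $\beta_i$ and $\beta_{i+1}$, i.e.\ modulo $\mathrm{lcm}(\beta_i,\beta_{i+1})$. Since $\mathrm{lcm}(\beta_i,\beta_{i+1})\ge\max(\beta_i,\beta_{i+1})$, this direct construction is less clean than the greedy one.

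I would use the greedy argument. Combining the three parts and dividing by $2^{\beta\cdot\sigma\beta}$ yields both~\eqref{eqn_card_est_nil_rep_cyclic_boolean} and~\eqref{eqn_prob_nil_rep_cyclic_Boolean}.
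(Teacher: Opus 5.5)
Your proposal is correct and follows the paper's proof essentially step for step: the same layer-deletion lower bound, the same covering of $\DAG(G_\beta)$ by linear (topological) orders with the cycle-to-backward-edge count via Lemma~\ref{lmm_fiber_bound}, and the same greedy selection of at least $m(\beta)/k$ edge-disjoint $k$-cycles. Your probabilistic phrasing of the last bound, using independence over disjoint edge sets, is the same computation as the paper's count $(2^k-1)^n2^{\beta\cdot\sigma\beta-nk}$ of subgraphs omitting an edge from each chosen cycle.
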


\begin{proof}
For each binary relation $U\subseteq V(G_\beta)^2$, let $\mathcal E(U)$ be the set of spanning subgraphs $D$ of $G_\beta$ with $E(D)\subseteq U$. We view orders on $V(G_\beta)$ and subsets $U\subseteq E(G_\beta)$ as binary relations of $V(G_\beta)$, and let $T$ denote the set of strict total orders on $V(G_\beta)$. Fix $j\in[k]$ so that $\beta_j\beta_{j+1}=m(\beta)$.

For the lower bound, define $U_j:=E(G_\beta)\setminus ([\beta_j]\times[\beta_{j+1}])$.
Any cycle in $G_\beta$ must pass from $[\beta_j]$ to $[\beta_{j+1}]$, so all graphs $D\in\mathcal E(U_j)$ are acyclic. Therefore $|\mathcal E(U_j)|\leq N_{\B}(\beta)$. Since $|U_j|=\beta\cdot\sigma\beta-m(\beta)$, we have
\[
2^{\beta\cdot\sigma\beta-m(\beta)}=|\mathcal E(U_j)|\leq N_{\B}(\beta).
\]

To establish $N_\mathbb B(\beta)\leq (\mathbf 1\cdot\beta)!2^{\beta\cdot\sigma\beta-m(\beta)}$, we show $\DAG(G_\beta)\subseteq\bigcup_{R\in T}\mathcal E(R)$. This implies
\begin{equation}\label{eqn_card_est_nil_rep_upper_bound_E(R)_covering}
N_{\B}(\beta) \leq \sum_{R\in T}|\mathcal E(R)|.
\end{equation}
Fix any $D\in\DAG(G_\beta)$ and an order $<_0\in T$. The \defn{height} of a vertex $u\in V(G_\beta)$ is the maximum length of a walk in $D$ with target $u$. 
Then, define $<_D\in T$ by declaring $u<_Dv$ if either $u$ has lesser height than $v$, or if $u$ and $v$ are of the same height and $u <_0v$.
For each edge $(u,v)\in E(D)$, $u$ is of lesser height than $v$, so $u<_Dv$. Therefore $D\in\mathcal E(<_D)$, so $\DAG(G_\beta)\subseteq\bigcup_{R\in T}\mathcal E(R)$.

Next, fix any $R\in T$. To bound $|\mathcal E(R)|$, write $R^{\c}:=E(G_\beta)\setminus R$ and define a map $\varphi:C_\beta\to R^{\c}$ that sends each $k$-cycle $C\in C_\beta$ 
to any edge $(u,v)\in E(C)\setminus R$. Such an edge $(u,v)$ exists, otherwise $R$ contains the cycle $C$. By Lemma~\ref{lmm_fiber_bound}, the size of each fiber $\varphi^{-1}((u,v))$ is bounded by $\frac{\prod_{i=1}^k\beta_i}{m(\beta)}$.
There are $|R^\c|$ such fibers, and these fibers partition the domain $C_\beta$. Since $C_\beta$ has cardinality $\prod_{i=1}^k\beta_i$, we have
\[\prod_{i=1}^k\beta_i\leq |R^{\c}|\frac{\prod_{i=1}^k\beta_i}{m(\beta)}.\]
Therefore $m(\beta)\leq |R^{\c}|$.
Graphs $D\in\mathcal E(R)$ have $\beta\cdot\sigma\beta-|R^{\c}|$ possible edges in $R\cap E(G_\beta)$, and $\beta\cdot\sigma\beta-|R^{\c}|\leq \beta\cdot\sigma\beta-m(\beta)$. Therefore,
\begin{equation}\label{eqn_card_est_nil_rep_upper_bound_E(R)_size}
    |\mathcal E(R)|\leq2^{\beta\cdot\sigma\beta-|R^{\c}|}\leq 2^{\beta\cdot\sigma\beta-m(\beta)}.
\end{equation}
Since $|V(G_\beta)|=\mathbf 1\cdot\beta$, there are $(\mathbf 1\cdot\beta)!$ strict total orders on $V(G_\beta)$. Thus~\eqref{eqn_card_est_nil_rep_upper_bound_E(R)_covering} and~\eqref{eqn_card_est_nil_rep_upper_bound_E(R)_size} give
\[N_{\B}(\beta)\leq \sum_{R\in T}|\mathcal E(R)|\leq (\mathbf 1\cdot\beta)!2^{\beta\cdot\sigma\beta-m(\beta)}.\]

Finally, we establish $N_{\B}(\beta)
\leq(1-2^{-k})^{m(\beta)/k}2^{\beta\cdot\sigma\beta}$. Each cycle  $C\in C_\beta$ is comprised of $k$ edges, and by Lemma~\ref{lmm_fiber_bound} each edge is contained in at most $\frac{\prod_{i=1}^k\beta_i}{m(\beta)}$ cycles in $C_\beta$. Therefore $C$ shares an edge with at most $k\frac{\prod_{i=1}^k\beta_i}{m(\beta)}$ cycles in $C_\beta$. Since $|C_\beta|=\prod_{i=1}^k\beta_i$, we can successively choose cycles $\{C_1,\ldots,C_n\}\subseteq C_\beta$ with $n\geq \frac{m(\beta)}{k}$, where each edge set $E(C_{i})$ is disjoint from $\bigcup_{j<i}E(C_j)$. Write $E:=\bigcup_{i=1}^nE(C_i)$.

For each $1\leq i\leq n$, there are $2^k-1$ subgraphs $D\in\mathcal E(E(C_i))$ that omit at least one edge of $C_i$. Thus, there are $(2^k-1)^n$ subgraphs $D\in \mathcal E(E)$ that omit at least one edge in each cycle $C_1,\ldots,C_n$. Since $|E(G_\beta)\setminus E|=\beta\cdot\sigma\beta-nk$, each $D\in\mathcal E(E)$ extends to
$2^{\beta\cdot\sigma\beta-nk}$ spanning subgraphs of $G_\beta$. Therefore, $(2^k-1)^n2^{\beta\cdot\sigma\beta-nk}$ spanning subgraphs omit at least one edge in each cycle. All DAGs $D\in\DAG(G_\beta)$ omit an edge from each cycle, which yields the bound
\[N_\mathbb B(\beta)\leq(2^k-1)^n2^{\beta\cdot\sigma\beta-nk}\leq(1-2^{-k})^{m(\beta)/k}2^{\beta\cdot\sigma\beta}.\qedhere \]
\end{proof}

\begin{corollary}
If $\beta^{(1)},\beta^{(2)},\ldots$ is a sequence of dimension vectors and $m(\beta^{(n)})\to\infty$, then
\[\Prob(\mathcal N_\B(\beta^{(n)}))\to 0.\]
\end{corollary}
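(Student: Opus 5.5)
The plan is to read the corollary off the upper bound in~\eqref{eqn_prob_nil_rep_cyclic_Boolean} of Lemma~\ref{lemma_nil_rep_cyclic_Boolean}; of the two terms inside the minimum, I will use the second. The first term, $(\mathbf 1\cdot\beta)!\,2^{-m(\beta)}$, does not obviously tend to zero: the factorial can grow faster than $2^{m(\beta)}$ when one coordinate of $\beta$ is large while $m(\beta)$ grows slowly. The second term, $(1-2^{-k})^{m(\beta)/k}$, depends only on $m(\beta)$ and the fixed $k$.

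First I dispose of the degenerate case. If some coordinate of $\beta^{(n)}$ is zero, then $m(\beta^{(n)})=0$, so the hypothesis $m(\beta^{(n)})\to\infty$ forces $\beta^{(n)}\in\mathbb Z_{\geq 1}^k$ for all sufficiently large $n$. Discarding finitely many terms does not affect the limit, so the lemma applies from some index onward.

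For these $n$ the lemma gives
\[
0\leq \Prob(\mathcal N_\B(\beta^{(n)}))\leq (1-2^{-k})^{m(\beta^{(n)})/k}.
\]
Since $k\geq 2$ is fixed, the base $1-2^{-k}$ lies in $(0,1)$. The exponent $m(\beta^{(n)})/k$ tends to $\infty$, so the right-hand side tends to $0$, and the squeeze theorem concludes the proof.

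There is no real obstacle here. The only points needing care are choosing the second term of the minimum rather than the factorial one, and noting that $k$ stays fixed along the sequence.
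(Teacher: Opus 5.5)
Your proposal is correct and is essentially the paper's proof: the paper also concludes directly from the second term $(1-2^{-k})^{m(\beta)/k}$ of the upper bound in~\eqref{eqn_prob_nil_rep_cyclic_Boolean}. You also note that $m(\beta^{(n)})\to\infty$ forces $\beta^{(n)}\in\mathbb Z_{\geq 1}^k$ eventually, so the lemma applies; the paper leaves this implicit.
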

\begin{proof}
    This follows from~\eqref{eqn_prob_nil_rep_cyclic_Boolean}, since for each $n\in\mathbb N$,
$\Prob(\mcN_{\B}(\beta^{(n)}))\leq(1-2^{-k})^{m(\beta^{(n)})/k}$.
\end{proof}

For any dimension vector $\beta\in\mathbb Z_{\geq 1}^k$ and $n\in\mathbb N$, let $n\beta:=(n\beta_1,\ldots,n\beta_k)$.

\begin{corollary}
Fix any dimension vector $\beta\in\mathbb Z_{\geq 1}^k$. Then
\[\lim_{n\to\infty}\left(\frac{\log_2\left(\Prob(\mathcal N_\B(n\beta)\right)}{n^2}\right)=-m(\beta).\]
\end{corollary}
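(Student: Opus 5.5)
We will squeeze $\log_2 \Prob(\mathcal N_\B(n\beta))$ between the two bounds of Lemma~\ref{lemma_nil_rep_cyclic_Boolean}, applied to the dimension vector $n\beta$, and divide by $n^2$. The first step is to compute how $m$ and $\mathbf 1\cdot\beta$ scale. Since $(n\beta)_i(n\beta)_{i+1}=n^2\beta_i\beta_{i+1}$, we have $m(n\beta)=n^2m(\beta)$, and $\mathbf 1\cdot(n\beta)=n(\mathbf 1\cdot\beta)$. Because $\beta\in\mathbb Z_{\geq 1}^k$, also $n\beta\in\mathbb Z_{\geq 1}^k$, so the lemma applies for every $n$.

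For the lower bound, \eqref{eqn_prob_nil_rep_cyclic_Boolean} gives $\Prob(\mathcal N_\B(n\beta))\geq 2^{-n^2m(\beta)}$. Taking $\log_2$ and dividing by $n^2$ yields
\[\frac{\log_2\Prob(\mathcal N_\B(n\beta))}{n^2}\geq -m(\beta)\]
for all $n$, so the $\liminf$ is at least $-m(\beta)$.

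For the upper bound we use the factorial part of the minimum, $\Prob(\mathcal N_\B(n\beta))\leq (n\,\mathbf 1\cdot\beta)!\,2^{-n^2m(\beta)}$. This gives
\[\frac{\log_2\Prob(\mathcal N_\B(n\beta))}{n^2}\leq -m(\beta)+\frac{\log_2\big((nS)!\big)}{n^2},\qquad S:=\mathbf 1\cdot\beta.\]
The only point needing an estimate is the factorial term. From the elementary bound $(nS)!\leq (nS)^{nS}$ we get $\log_2((nS)!)\leq nS\log_2(nS)=O(n\log n)$. Dividing by $n^2$, this term tends to $0$, so the $\limsup$ is at most $-m(\beta)$.

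Combining the two bounds shows that the limit exists and equals $-m(\beta)$. The other term in the minimum, $(1-2^{-k})^{m/k}$, decays only like $2^{-cn^2}$ with $c=-\tfrac{m(\beta)}{k}\log_2(1-2^{-k})<m(\beta)$, so it does not give the sharp rate and is not needed. The main subtlety is simply recognizing that the factorial overcount is subexponential in $n^2$. No further obstacle is expected.
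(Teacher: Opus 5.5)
Your proposal is correct and follows the paper's proof essentially step for step. Like the paper, you apply the bounds of Lemma~\ref{lemma_nil_rep_cyclic_Boolean} to $n\beta$, use the scalings $m(n\beta)=n^2m(\beta)$ and $\mathbf 1\cdot(n\beta)=n(\mathbf 1\cdot\beta)$, and dispose of the factorial term via $(nS)!\leq (nS)^{nS}$.
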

\begin{proof}
    For each $n\in\mathbb N$, $m(n\beta)=n^2m(\beta)$ and $\mathbf 1\cdot (n\beta)=n(\mathbf 1\cdot\beta)$. Taking logarithms in~\eqref{eqn_prob_nil_rep_cyclic_Boolean} and dividing by $n^2$ gives
    \[-m(\beta)\leq\frac{\log_2\left(\Prob(\mcN_{\B}(n\beta))\right)}{n^2}\leq \frac{\log_2((n(\mathbf 1\cdot\beta))!)}{n^2}-m(\beta).\]
    The result follows since $\frac{\log_2((n(\mathbf 1\cdot\beta))!)}{n^2}\to 0$. To see this, note $(n(\mathbf 1\cdot\beta))!\leq (n(\mathbf 1\cdot\beta))^{n(\mathbf 1\cdot\beta)}$. Therefore
    \[\frac{\log_2((n(\mathbf 1\cdot\beta))!)}{n^2}\leq \frac{(\mathbf 1\cdot\beta)\log_2(n(\mathbf 1\cdot\beta))}{n}\to 0.\qedhere\]
\end{proof}

\section{Comparing eventually constant representations to nilpotent representations}
\label{section_comparing_set_theoretic_vs_eqns}

Let $Q$ be the cyclic quiver with path category $\mathcal Q$, and suppose $\kk$ is the finite field $\mathbb F_q$. We compare representations of $Q$ over $\mathbb F_q$, functors from $\mathcal Q$ to $\Vect_{\mathbb F_q}$, to the \defn{set-valued representations} of $Q$, functors from $\mathcal Q$ to the category $\FinSet$ of finite sets. Let $\Rep_\Set(Q)$ denote the functor category from $\mathcal Q$ to $\FinSet$, and $\Rep(Q)$ the functor category from $\mathcal Q$ to $\Vect_{\mathbb F_q}$.
A set-valued representation $W$ is \defn{eventually constant} if there exists a $k$-tuple $(c_1,\ldots,c_k)\in\prod_{i=1}^kW_i$ such that for all sufficiently long paths $p:i\to j$ in $\mathcal Q$, $W_p$ maps $W_i$ to $\{c_j\}\subseteq W_j$.

In the set-theoretic context, an element $\beta\in\mathbb Z_{\geq 1}^k$ is a \defn{cardinality vector}. For fixed sets $X_1,\ldots,X_k$ of size $|X_i|=\beta_i$, $\Rep_\Set(Q,\beta)\subseteq\Rep_\Set(Q)$ denotes the set-valued representations taking each vertex $i\in[k]$ to the set $X_i$. As in the linear case, $\Rep_\Set(Q,\beta)$ identifies with a product, $\prod_{i=1}^k\Hom_\Set(X_i,X_{i+1})$. Let $\mathcal E\mathcal C(Q,\beta)\subseteq\Rep_\Set(Q,\beta)$ be the subset of eventually constant representations and note that $f\in\Rep_\Set(Q,\beta)$ is eventually constant if and only if the composition $f_k\cdots f_1$ is eventually constant as an endomorphism of $X_1$.

First, we consider functors that take linear representations to set-valued representations. 
Fix a dimension vector $\beta\in\mathbb Z_{\geq 0}^k$ with vector spaces $V_1,\ldots,V_k$ of dimension $\dim V_i=\beta_i$, and define the cardinality vector $q^\beta:=(q^{\beta_1},\ldots,q^{\beta_k})\in\mathbb Z_{\geq 1}^k$.
Let $U:\Vect_\kk\to\FinSet$ be the forgetful functor that takes each linear map $f:V\to V'$ to its underlying function on sets, $U(f):U(V)\to U(V')$.
The functor $U$ induces a functor $U_*:\Rep(Q)\to\Rep_\Set(Q)$ by
post-composition; $U_*$ sends each linear representation $W:\mathcal Q\to\Vect_{\mathbb F_q}$ to the set-valued representation
$U\circ W:\mathcal Q\to\FinSet$.

Since each $|U(V_i)|=q^{\beta_i}$, we can take the representation spaces
\[\Rep(Q,\beta)=\prod_{i=1}^k\Hom(V_i,V_{i+1}),\quad\Rep_\Set(Q,q^\beta)=\prod_{i=1}^k\Hom_\Set(U(V_i),U(V_{i+1})).\]
Concretely, $U_*$ takes each $f\in\Rep(Q,\beta)$ to
$(U(f_1),\ldots,U(f_k))\in\Rep_\Set(Q,q^\beta)$.

\begin{proposition}
    For each linear representation $f\in\Rep(Q,\beta)$, $f\in\mathcal N(\beta)$ if and only if $U_*(f)\in\mathcal E\mathcal C(Q,q^\beta)$.
\end{proposition}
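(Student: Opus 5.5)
Both directions are reduced to the endomorphism $g:=f_k\circ\cdots\circ f_1$ of $V_1$, using the two facts recorded earlier. First, $f$ is nilpotent if and only if $g$ is nilpotent (Section~\ref{subsection_rep_quivers}). Second, $U_*(f)$ is eventually constant if and only if $U(f_k)\circ\cdots\circ U(f_1)=U(g)$ is eventually constant as a self-map of $U(V_1)$. Since $U$ is a functor, $U(f_k)\circ\cdots\circ U(f_1)=U(g)$. So it suffices to show: a linear endomorphism $g$ of the finite-dimensional space $V_1$ is nilpotent if and only if the underlying function $U(g)$ is eventually constant.

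For the forward direction, suppose $g^m=0$. Then $U(g)^m=U(g^m)$ is the constant map to $0$, and hence so is $U(g)^{m'}$ for every $m'\geq m$. Thus $U(g)$ is eventually constant with constant value $c=0$.

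For the converse, suppose $U(g)^m$ is the constant function with value $c$ for all sufficiently large $m$. Linearity forces $g^m(0)=0$, so $c=0$. Hence $g^m=0$ as a linear map, and $g$ is nilpotent.

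The one delicate point is the precise meaning of "eventually constant" for $U_*(f)$ when some sets are involved. The definition asks for a tuple $(c_1,\ldots,c_k)$ such that all sufficiently long paths $p:i\to j$ send $W_i$ to $\{c_j\}$. I would check this directly rather than only through the reduction to $V_1$. If $f$ is nilpotent, take every $c_j=0$: long compositions $f_{i+\ell-1}\cdots f_i$ vanish, so their underlying functions map onto $\{0\}$. Conversely, if long paths are constant with value $c_j$, then evaluating at $0\in V_i$ gives $c_j=0$, since every linear map sends $0$ to $0$. Then all long compositions are zero, which is exactly the definition of nilpotence.

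This direct argument also handles the edge case where some $\beta_i=0$. In that case $V_i=\{0\}$ and both conditions hold trivially. I do not expect any real obstacle; the only step requiring care is forcing $c_j=0$ via the zero vector.
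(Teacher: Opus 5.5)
Your proposal is correct and follows essentially the same argument as the paper: reduce to $g=f_k\circ\cdots\circ f_1$ on $V_1$, use functoriality to get $U(g)^n=U(g^n)$, and note that a linear map is constant if and only if it is zero, since it sends $0$ to $0$. Your extra direct check from the definition, using the tuple $(c_1,\ldots,c_k)$ and evaluating at the zero vector, is a valid supplement that avoids relying on the reduction to $V_1$, but the paper does not need it.
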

\begin{proof}
    The linear representation $f$ is nilpotent if and only if $(f_k\cdots f_1)^n=0$ for sufficiently large $n\in\mathbb N$. Likewise, $U_*(f)$ is eventually constant if and only if $(U(f_k)\cdots U(f_1))^n$ is constant for sufficiently large $n\in\mathbb N$. The result follows since the linear map $(f_k\cdots f_1)^n$ is zero if and only if it is constant and $(U(f_k)\cdots U(f_1))^n=U((f_k\cdots f_1)^n)$ by functoriality.
\end{proof}

For each representation $f\in\Rep(Q,\beta)$, each linear map $f_i$ is determined by its underlying map $U(f_i)$. Thus $U_*$ is injective on $\Rep(Q,\beta)$ and yields a bijection
\[\mathcal N(\beta)\xrightarrow\cong\mathcal E\mathcal C(Q,q^\beta)\cap U_*(\Rep(Q,\beta)).\]
Since $|\Rep(Q,\beta)|=|U_*(\Rep(Q,\beta))|$, we have
\[\frac{N(\beta)}{|\Rep(Q,\beta)|}=\frac{|\mathcal E\mathcal C(Q,q^\beta)\cap U_*(\Rep(Q,\beta))|}{|U_*(\Rep(Q,\beta))|}.\]
\begin{proposition}
The probability that a linear representation $f\in\Rep(Q,\beta)$ is nilpotent is equal to the probability that a set-valued representation $f\in\Rep_\Set(Q,q^\beta)$ is eventually constant,
    \begin{equation*}
    \frac{N(\beta)}{|\Rep(Q,\beta)|} 
    = 1 - \prod_{i=1}^k(1-q^{-\beta_i})
    = \frac{|\mathcal E\mathcal C(Q,q^\beta)|}{|\Rep_\Set(Q,q^\beta)|}.
    \end{equation*}
\end{proposition}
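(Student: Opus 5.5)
The first equality is exactly Proposition~\ref{prop_prob_nil_rep_fin_field}, so the work is in the second equality: I will show that
\[
\frac{|\mathcal E\mathcal C(Q,q^\beta)|}{|\Rep_\Set(Q,q^\beta)|}=1-\prod_{i=1}^k(1-q^{-\beta_i}).
\]
The approach is to count eventually constant set-valued representations directly. I would first prove a set-theoretic analogue of Theorem~\ref{thm_nilp_geometric_bijection}, or else quote the formula from \cite[Proposition 3.5]{GHI26}, which gives the probability for cardinality vector $(n_1,\ldots,n_k)$ as $1-\prod_{i=1}^k(1-n_i^{-1})$. Substituting $n_i=q^{\beta_i}$ then finishes the proof at once.

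To make the argument self-contained, I would instead count $\mathcal E\mathcal C(Q,n)$ for a general cardinality vector $n=(n_1,\ldots,n_k)$ with sets $X_i$ of size $n_i$, mirroring the linear case.

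\textbf{Reduction to eventually constant endomorphisms.} Recall that $f$ is eventually constant if and only if $f_k\cdots f_1$ is eventually constant on $X_1$. A self-map of a finite set is eventually constant exactly when its functional graph is a rooted tree with a fixed point at the root. The classical analogue of Leinster's bijection is the count of such maps on $[m]$: there are $m^{m-1}$ of them, that is, a proportion $1/m$ of all self-maps. This matches the Fine--Herstein proportion $q^{-n}$ with $m=q^n$.

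\textbf{Cyclic generalisation.} I would prove the bijection
\[
\mathcal E\mathcal C(Q,n)\times\prod_i X_i\cong\Rep_\Set(Q,n)\times Z_\Set ,
\]
where $Z_\Set$ is the set of tuples in which some coordinate equals a marked basepoint $*_i\in X_i$. The counting consequence is
\[
|\mathcal E\mathcal C|\prod_i n_i=\prod_i n_{i+1}^{n_i}\Bigl(\prod_i n_i-\prod_i(n_i-1)\Bigr).
\]

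A cleaner probabilistic route avoids constructing the bijection. Choose $f$ uniformly at random. Then $f$ is eventually constant exactly when the disjoint union of the $X_i$ under the induced map $F$ (sending $x\in X_i$ to $f_i(x)\in X_{i+1}$) has a single cycle, necessarily of length $k$. I would verify $\Prob=1-\prod_i(1-1/n_i)$ by a Prüfer/Joyal-style argument for $k$-partite functional digraphs. Such digraphs with a marked point in each part biject with pairs consisting of a tree structure and a ``spine'' sequence. The count of spines through the parts gives the inclusion–exclusion factor $\prod n_i-\prod(n_i-1)$.

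\textbf{Main obstacle.} The hardest step is this Joyal-type bijection for the $k$-partite cyclic setting: making the spine decomposition respect the parts, so that the factor comes out exactly as $|Z_\Set|$. I would try first to adapt the proof of Lemma~\ref{lmm_X_Y_Z_bijection} with ``$v_i=0$'' replaced by ``$v_i$ equals the eventual constant $c_i$''. Failing that, I would simply cite \cite[Proposition 3.5]{GHI26}, as the paper signals is intended, and conclude by substituting $n_i=q^{\beta_i}$ and comparing with~\eqref{eqn_prob_nil_cyclic_rep}.
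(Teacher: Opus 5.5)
Your fallback route is exactly the paper's proof. The first equality is Proposition~\ref{prop_prob_nil_rep_fin_field}, and the second follows by citing \cite[Proposition 3.5]{GHI26} for $|\mathcal E\mathcal C(Q,q^\beta)|$ and dividing by $|\Rep_\Set(Q,q^\beta)|=\prod_i (q^{\beta_{i+1}})^{q^{\beta_i}}=q^{q^\beta\cdot\sigma\beta}$, which amounts to your substitution $n_i=q^{\beta_i}$. Your self-contained sketch is not needed, and as you acknowledge it is not actually carried out. That covers both the set-theoretic analogue of Theorem~\ref{thm_nilp_geometric_bijection} and the Joyal-type spine bijection, so the citation is what makes your argument complete.
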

\begin{proof}
The first equality follows from~\eqref{eqn_prob_nil_cyclic_rep}.
For the second equality, 
we have 
\begin{equation*}
|\mathcal E\mathcal C(Q,q^\beta)| = q^{q^\beta\cdot\sigma\beta-\mathbf 1\cdot\beta}\left( q^{\mathbf 1\cdot\beta} 
- \prod_{j=1}^{k} (q^{\beta_j} -1) \right), 
\end{equation*}
by~\cite[Proposition 3.5]{GHI26}, and
\[|\Rep_\Set(Q,q^\beta)| = \left|\prod_{i=1}^k\Hom_\Set(U(V_i),U(V_{i+1}))\right| = q^{\sum_{i=1}^{k} \beta_{i+1}q^{\beta_i}}=q^{q^\beta\cdot\sigma\beta}.\]
We thus obtain that 
$|\mathcal E\mathcal C(Q,q^\beta)|/|\Rep_\Set(Q,q^\beta)| = 1 - \prod_{i=1}^k(1-q^{-\beta_i})$. 
\end{proof}

Next, we consider a functor that takes set-valued representations to linear representations. Let $\kk$ be any field and fix a cardinality vector $\beta\in\mathbb Z_{\geq 1}^k$ with sets $X_1,\ldots,X_k$ satisfying $|X_i|=\beta_i$.

The \defn{linearization functor}
$\kk[-]:\FinSet\to\Vect_\kk$ takes each set $X$ to the vector space generated by $X$,
$\kk[X] := \left\{\sum_{x\in X}k_x\cdot x : k_x\in\kk \right\}$. It sends each function $f:X\to Y$ to the linear map
\begin{equation}
    \kk[f]:\kk[X]\to\kk[Y],\quad\sum_{x\in X}k_x\cdot x\mapsto\sum_{x\in X}k_x\cdot f(x).
\end{equation}
Write $[1]=\{1\}$ and identify $\kk[[1]]$ with $\kk$. For each set $X$, write $!_X$ for the unique map $!_X:X\to[1]$.
All functions $f:X\to Y$ satisfy $!_Y\circ f=!_X$, so functoriality of $\kk[-]$ gives
\begin{equation}\label{eqn_naturality_kk[-]}
\kk[!_Y]\circ\kk[f]=\kk[!_X].
\end{equation}

For all sets $X\in\FinSet$, define 
\[
\widetilde H_0(X,\kk):=\ker\!(\kk[!_X]) 
= \left\{ \sum_{x\in X}k_x\cdot x\in\kk[X] : \sum_{x\in X}k_x=0 \right\} \subseteq \kk[X].
\] By~\eqref{eqn_naturality_kk[-]}, $\kk[f]$ maps $\widetilde H_0(X,\kk)$ to $\widetilde H_0(Y,\kk)$, so let $\widetilde H_0(f,\kk):\widetilde H_0(X,\kk)\to\widetilde H_0(Y,\kk)$ be the restriction of $\kk[f]$. This gives the \defn{reduced zeroth homology functor} $\widetilde H_0(-,\kk):\FinSet\to\Vect_\kk$.
For nonempty $X$, rank-nullity gives $\dim\widetilde H_0(X,\kk)=\dim\kk[X]-1=|X|-1$.

\begin{proposition}\label{prop_constant_0}
    For any function $f:X\to Y$, $f$ is constant if and only if $\widetilde H_0(f,\kk)=0$.
\end{proposition}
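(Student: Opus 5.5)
The plan is to test $\widetilde H_0(f,\kk)$ on a convenient spanning set of $\widetilde H_0(X,\kk)$. The differences $x-x'$ with $x,x'\in X$ work: every $\sum_x k_x\cdot x$ with $\sum_x k_x=0$ can be written, after fixing some $x_0\in X$, as $\sum_{x}k_x\cdot(x-x_0)$. If $X$ is empty, then $\widetilde H_0(X,\kk)=0$ and $f$ is vacuously constant, so we may assume $X\neq\emptyset$ and this spanning set is available.

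\emph{Forward direction.} Suppose $f$ is constant with value $c\in Y$. For any $\sum_x k_x\cdot x\in\widetilde H_0(X,\kk)$ we have
\[
\kk[f]\Big(\sum_x k_x\cdot x\Big)=\sum_x k_x\cdot f(x)=\Big(\sum_x k_x\Big)\cdot c=0 .
\]
Hence the restriction $\widetilde H_0(f,\kk)$ is zero.

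\emph{Converse.} Suppose $\widetilde H_0(f,\kk)=0$ and let $x,x'\in X$. Then $x-x'\in\widetilde H_0(X,\kk)$, so
\[
0=\kk[f](x-x')=f(x)-f(x')\in\kk[Y].
\]
The elements of $Y$ form a basis of $\kk[Y]$, so $f(x)-f(x')$ vanishes only if $f(x)=f(x')$. (If $f(x)\neq f(x')$, this vector has coordinate $1$ at $f(x)$.) Therefore $f$ is constant.

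There is no real obstacle here. The only point that needs care is the last one: the conclusion rests on linear independence of the basis $Y\subseteq\kk[Y]$, and that independence holds for any field $\kk$, including characteristic $2$.
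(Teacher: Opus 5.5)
Your proof is correct and follows essentially the same route as the paper's: the forward direction computes $\sum_x k_x\cdot f(x)=\bigl(\sum_x k_x\bigr)\cdot c=0$, and the converse applies $\widetilde H_0(f,\kk)$ to the differences $x-x'$. Your remarks on the empty case and on linear independence of $Y$ in $\kk[Y]$ make explicit a step the paper leaves implicit; the spanning-set observation is never actually used, but it does no harm.
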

\begin{proof}
    If $f$ is constant at $y_0\in Y$, then for all $\sum_{x\in X}k_x\cdot x\in\widetilde H_0(X,\kk)$,
    \[\widetilde H_0(f,\kk)\left(\sum_{x\in X}k_x\cdot x\right)=\sum_{x\in X}k_x\cdot f(x)=\left(\sum_{x\in X}k_x\right)\cdot y_0=0.\]
    Conversely, if $\widetilde H_0(f,\kk)=0$, then for all distinct $x,\,x'\in X$, $x-x'\in \widetilde H_0(X,\kk)$ and
    \[f(x)-f(x')=\widetilde H_0(f,\kk)\left(x-x'\right)=0,\]
    so $f(x)=f(x')$. As this holds for all $x,\,x'\in X$, $f$ is constant.
\end{proof}

The functor $\widetilde H_0(-,\kk):\FinSet\to\Vect_\kk$ induces a functor
$\widetilde H_0(-,\kk)_*:\Rep_\Set(Q)\to\Rep(Q)$ by post-composition. Concretely, $f\in\Rep_\Set(Q,\beta)$ maps to $(\widetilde H_0(f_1,\kk),\ldots,\widetilde H_0(f_k,\kk))\in\Rep(Q)$, with dimension vector $\beta-\mathbf 1$.

\begin{proposition}\label{prop_eventually_constant_nilpotent}
A set-valued representation $f\in\Rep_\Set(Q,\beta)$ is eventually constant if and only if $\widetilde H_0(-,\kk)_*(f)$ is nilpotent.
\end{proposition}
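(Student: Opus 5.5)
The plan is to reduce both conditions to statements about the single composite $g:=f_k\circ\cdots\circ f_1:X_1\to X_1$ and its powers, and then apply Proposition~\ref{prop_constant_0}. As noted earlier in this section, $f$ is eventually constant if and only if $g$ is eventually constant as an endomorphism of $X_1$. Concretely, this means $g^n$ is a constant function for all sufficiently large $n$. Similarly, as observed in Section~\ref{subsection_rep_quivers}, a linear representation is nilpotent if and only if the composite around the cycle at vertex $1$ is nilpotent. So $\widetilde H_0(-,\kk)_*(f)$ is nilpotent if and only if $\widetilde H_0(f_k,\kk)\circ\cdots\circ\widetilde H_0(f_1,\kk)$ is nilpotent on $\widetilde H_0(X_1,\kk)$.

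First, by functoriality of $\widetilde H_0(-,\kk)$, this linear composite equals $\widetilde H_0(g,\kk)$. Its $n$-th power is then $\widetilde H_0(g^n,\kk)$.

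Next, apply Proposition~\ref{prop_constant_0} to the function $g^n:X_1\to X_1$. It says $g^n$ is constant if and only if $\widetilde H_0(g^n,\kk)=\widetilde H_0(g,\kk)^n=0$. Hence there exists $n$ with $g^n$ constant if and only if $\widetilde H_0(g,\kk)$ is nilpotent.

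The one point needing care is the quantifiers in "eventually constant". The definition asks for a fixed tuple $(c_1,\ldots,c_k)$ such that all long paths land on the corresponding $c_j$, whereas the argument above only produces some $n$ with $g^n$ constant. I would handle this directly. Suppose $g^n$ is constant with value $c_1$. Then $g^{n+1}=g^n\circ g$ is also constant with value $c_1$, so all higher powers share the value $c_1$. Set $c_j:=f_{j-1}\cdots f_1(c_1)$ for $2\le j\le k$. Any sufficiently long path $i\to j$ factors as a path into vertex $1$, followed by $g^n$, followed by $f_{j-1}\cdots f_1$, so it is constant at $c_j$. Conversely, eventual constancy trivially makes some $g^n$ constant. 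Alternatively, I could simply cite the paper's prior remark that eventual constancy of $f$ is equivalent to that of $g$.

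Nonemptiness of $X_1$ is guaranteed because $\beta\in\mathbb Z_{\ge1}^k$, so constancy is meaningful. I expect no real obstacle; the functoriality identity together with Proposition~\ref{prop_constant_0} carries the proof.
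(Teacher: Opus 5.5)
Your proposal is correct and follows essentially the same route as the paper. You reduce both conditions to powers of $g=f_k\circ\cdots\circ f_1$ on $X_1$, use functoriality to identify $\bigl(\widetilde H_0(f_k,\kk)\cdots\widetilde H_0(f_1,\kk)\bigr)^n$ with $\widetilde H_0(g^n,\kk)$, and conclude by Proposition~\ref{prop_constant_0}; your extra paragraph on the quantifiers in ``eventually constant'' correctly spells out the equivalence the paper takes from its earlier remark.
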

\begin{proof}
    The set-valued representation $f$ is eventually constant if and only if $(f_k\cdots f_1)^n:X_1\to X_1$ is constant for sufficiently large $n\in\mathbb N$. Likewise, the linear representation
    $\widetilde H_0(-,\kk)_*(f)$ is nilpotent if and only if $(\widetilde H_0(f_k,\kk)\cdots\widetilde H_0(f_1,\kk))^n$ is $0$ for sufficiently large $n\in\mathbb N$. By functoriality,
    \[(\widetilde H_0(f_k,\kk)\cdots\widetilde H_0(f_1,\kk))^n=\widetilde H_0((f_k\cdots f_1)^n,\kk).\]
    Therefore these conditions are equivalent by Proposition~\ref{prop_constant_0}.
\end{proof}


\bibliographystyle{amsalpha} 
\bibliography{nilpotent_finite}

\end{document}